\newcommand{\N}{\mathbb{N}}
\newcommand{\R}{\mathbb{R}}
\newcommand{\Z}{\mathbb{Z}}
\newcommand{\lp}{\left(}
\newcommand{\rp}{\right)}
\newcommand{\wt}{\widetilde}
\newcommand{\wh}{\widehat}
\newcommand{\sgn}{\textnormal{sgn}}
\newcommand{\supp}{\textnormal{supp\,}}
\newcommand{\Cone}[2]{\|#1\|_{C^1(#2)}}
\newcommand{\Conedot}[2]{\|#1\|_{\dot C^1(#2)}}
\DeclareMathOperator{\Spec}{Spec}
\DeclareMathOperator{\Cov}{Cov}
\DeclareMathOperator{\tr}{trace}
\DeclareMathOperator{\Hess}{Hess}
\DeclareMathOperator{\inj}{inj}
\renewcommand{\b}{\mathfrak{b}}
\newcommand{\Ce}{{C_{\bad}}}
\newcommand{\Ces}{{C_{\bad}^*}}
\def\thmhead@plain#1#2#3{%
  \thmname{#1}\thmnumber{\@ifnotempty{#1}{ }\@upn{#2}}%
  \thmnote{ {\the\thm@notefont#3}}}
\let\thmhead\thmhead@plain
\newtheorem{thm}{Theorem}
\newtheorem{lem}{Lemma}
\newtheorem{prop}[lem]{Proposition}
\newtheorem{cor}[lem]{Corollary}
\newtheorem{rmk}[lem]{Remark}
\newtheorem{conjecture}[lem]{Conjecture}
\newcommand{\thmref}[1]{Theorem~\ref{#1}}
\newcommand{\secref}[1]{Section~\ref{#1}}
\newcommand{\lemref}[1]{Lemma~\ref{#1}}
\newcommand{\corref}[1]{Corollary~\ref{#1}}
\newcommand{\propref}[1]{Proposition~\ref{#1}}
\definecolor{bpurple}{RGB}{170,0,200}
\definecolor{jgreen}{RGB}{71,173,73}
\numberwithin{equation}{section}
\numberwithin{lem}{section}
\newcommand{\w}{{\rm{w}}}
\newcommand{\bad}{{\varepsilon}}
\renewcommand{\emph}[1]{{\it{#1}}}
\begin{document}

\title[Spectral function asymptotics on Zoll manifolds]{Asymptotics for the spectral function on Zoll manifolds}
\author[Y. Canzani]{Yaiza Canzani}
\email{\href{mailto:canzani@email.unc.edu}{canzani@email.unc.edu}}
\address{Department of Mathematics, UNC-Chapel Hill \\ CB\#3250
  Phillips Hall \\ Chapel Hill, NC 27599}

\author[J. Galkowski]{Jeffrey Galkowski}
\email{\href{mailto:j.galkowski@ucl.ac.uk}{j.galkowski@ucl.ac.uk}}
\address{Department of Mathematics, University College London\\ CB\#3250
  Union Building \\ London, United Kingdom WC1H 0AY}

\author[B. Keeler]{Blake Keeler}
\email{\href{mailto:bkeeler2015@gmail.com}{bkeeler2015@gmail.com}}
\address{Department of Mathematics and Statistics, McGill Univeristy \\ Montreal, Quebec Canada H3A 0B9}

\date{\today}

\begin{abstract}
Let $(M,g)$ be a Zoll manifold, i.e., a smooth, compact, Riemannian manifold without boundary all of whose geodesics are closed with a minimal common period $T$. The positive definite Laplace-Beltrami operator has eigenvalues $\{\lambda_j^2\}_j$ which cluster around $\nu^2_\ell$ for some sequence $\nu_\ell\to \infty$. This article is concerned with the number of $\lambda_j$ in a window of fixed size $\w$ around $\nu_\ell$, denoted by
$\mathcal{N}(\nu_\ell,\w):=\#\{j\,:\, \lambda_j\in[\nu_\ell-\w,\nu_\ell+\w]\}.$
When the set of trajectories with period smaller than $T$ has zero measure,  there is $c_{n}>0$, depending only on $n=\dim M$, such that 
$$
\mathcal{N}(\nu_\ell,\w) =c_n\textup{vol}_g(M)\nu_{\ell}^{n-1}+o(\nu_{\ell}^{n-1}),
$$ 
as $\ell \to \infty$.
However, for a general Zoll manifold this may not be the case. We show that, nevertheless, there is $N>0$, independent of $\ell$, such that 
$$
\sum_{j=0}^{N-1}\mathcal{N}(\nu_{\ell+j},\w)= c_nN\textup{vol}_g(M)\nu_{\ell}^{n-1}+o(\nu_{\ell}^{n-1}),
$$
as $\ell \to \infty$.
In addition to asymptotics for the counting function, we study the kernel of the spectral projector for the Laplacian, $\Pi_{\ell,\w}(x,y)$ onto the spectrum in ${\bigcup_{j=0}^{N-1}[\nu_{\ell+j}-\w,\nu_{\ell+j}+\w]}$. We show that for $x$ and $y$ in a shrinking neighborhood of a point with few loops of length smaller than $T$, $\Pi_{\ell,\w}(x,y)$ and its derivatives have the same asymptotics as those on the round sphere and flat torus.

\end{abstract}
\maketitle

\section{Introduction}

Let $(M,g)$ be a compact, Riemannian manifold without boundary and let $\Delta_g$ be the associated, negative definite,  Laplace-Beltrami operator. Denote the eigenvalues of $-\Delta_g$ by $0 = \lambda_0^2 < \lambda_1^2 \le \lambda_2^2\le\dotsm$  repeated according to multiplicity. 
For $I \subset \R$ let
$$\mathcal N(I):=\#\{j: \lambda_j\in I\}.$$
The Weyl Law states that 
\begin{equation}\label{e:Weyl}
\mathcal N([0, \lambda]) =(2\pi)^{-n}\text{vol}(\mathbb{B}_{n}) \text{vol}_g(M)\lambda^n + R(\lambda),
\end{equation}
where $R(\lambda)=\mathcal O(\lambda^{n-1})$ as $\lambda \to \infty$ \cite{Wey12, Lev53, Ava56, Hor68}.  This remainder term is sharp and is saturated, for example, on the round sphere, $\mathbb{S}^n$. However, when the set of closed geodesics has measure zero in $S^*M\!$, the remainder, $R(\lambda)$, can be improved to $o(\lambda^{n-1})$~\cite{DG75}. The improved remainder also allows for asymptotics on short windows: for $\w>0$,
\begin{equation}\label{e:Weyl2}
\mathcal{N}([\lambda-\w,\lambda+\w])=  2\w(2\pi )^{-n} \textnormal{vol}(\mathbb{S}^{n-1})\text{vol}_g(M)\lambda^{n-1} + o_{\w}(\lambda^{n-1}).
\end{equation}

A {\it{Zoll manifold $(M,g)$}} is a smooth, compact, Riemannian manifold without boundary such that all of its geodesics are periodic with a common period. This is a rich class of manifolds that includes compact rank one symmetric spaces. Indeed, while the most well known example of a Zoll manifold is the round sphere, $\mathbb{S}^2$, the moduli space of Zoll metrics on $\mathbb S^2$ is infinite dimensional~\cite{Gu:76}. 

It is well known that, like on the sphere of radius $\frac{T}{2\pi}$, the eigenvalues of $-\Delta_g$ on a Zoll manifold with minimal common period $T$ are strongly clustered near the sequence
\begin{equation}\label{e:Zoll_eigs}
\nu_\ell := \frac{2\pi}{T}\Big(\ell + \frac{\mathfrak{a}}{4}  \Big), \quad \ell = 0,1,2,\dotsc,
\end{equation}
where $\mathfrak{a}$ is the common Maslov index of the closed  geodesics \cite{Wei75, Wei77, DG75, CV79, Cha80}. The remainder estimate $R(\lambda)=O(\lambda^{n-1})$ is saturated on any Zoll manifold. As proved in \cite{DG75}, if the set of periodic trajectories with period $<T$ has zero measure, then a modified version of~\eqref{e:Weyl2} which takes into account the clustering holds: for all $0<\w<\frac{2\pi}{T}$
\begin{equation}\label{e:remainderWeylZoll1}
\mathcal{N}([\nu_{\ell}-\w,\nu_{\ell}+\w])=\frac{2\pi}{T}(2\pi )^{-n}\textnormal{vol}(\mathbb{S}^{n-1})\textnormal{vol}_g(M)\nu_\ell^{n-1}+o_{\w}(\nu_\ell^{n-1}).
\end{equation}

A Zoll manifold all of whose geodesics do not self-intersect before time $T$ is called simply closed, or $SC_T$, in the language of~\cite{Zel97}. In particular,~\eqref{e:remainderWeylZoll1} (and much stronger estimates) hold for an $SC_T$ manifold. Many Zoll manifolds are $SC_T$ manifolds. Indeed, all known smooth Zoll metrics on simply connected manifolds yield $SC_T$ manifolds.  However, as far as the authors are aware, the only topological manifold on which all Zoll metrics are known to be $SC_T$ metrics is $\mathbb{S}^2$ ~\cite{GrGr:81}. 

The example of the disjoint union of two simply connected Zoll manifolds with different, rationally related minimal common periods shows that~\eqref{e:remainderWeylZoll1} cannot hold without additional assumptions. One would like to know whether all connected, smooth, compact, Zoll manifolds satisfy~\eqref{e:remainderWeylZoll1}. However, we do not know whether such metrics must have a zero measure set of periodic geodesics of period $<T$. Our first theorem shows that, nevertheless, an analog of~\eqref{e:remainderWeylZoll1} holds for \emph{every} Zoll manifold  provided that one is willing to sum over a finite number of small windows. In what follows, $\inj(M)$ denotes the injectivity radius of $M$.
\begin{thm}\label{t:main}
Let $(M,g)$ be a smooth, compact, Zoll manifold of dimension $n \ge 2$ with minimal common period $T > 0$. Then, there is an integer $0<N_0<\frac{T}{\inj (M)}$ such that for all $N\geq N_0$ and $0<\w<\frac{2\pi}{T}$, 
\begin{equation}\label{e:remaindeWeyl}
\begin{aligned}\sum_{j=0}^{N-1}\mathcal{N}([\nu_{\ell+j}-\w,\nu_{\ell+j}+\w])&=\frac{2\pi N}{T}(2\pi )^{-n}\textnormal{vol}(\mathbb{S}^{n-1})\textnormal{vol}_g(M)\nu_\ell^{n-1}+o_{\w}(\nu_\ell^{n-1}),
\end{aligned}
\end{equation}
as $\ell \to \infty$.
\end{thm}
We note that $N_0$ can be taken to be the smallest integer such that the set of trajectories with period smaller than $T/N_0$ has zero Liouville measure on $S^*\!M$. Indeed, for any $SC_T$ manifold, $N_0=1$, and this recovers ~\eqref{e:remainderWeylZoll1}.

We also note that the estimate~\eqref{e:remaindeWeyl} captures the majority of the eigenvalues in the window $[\nu_{\ell}-\w,\nu_{\ell+N-1}+\w]$, since
\begin{equation}
\label{e:comparison}
\sum_{j=0}^{N-1}\mathcal{N}([\nu_{\ell+j}-\w,\nu_{\ell+j}+\w])=\mathcal{N}([\nu_{\ell}-\w,\nu_{\ell+N-1}+\w])+o(\nu_{\ell}^{n-1}).
\end{equation}
In fact, substantially stronger estimates than~\eqref{e:comparison} hold (see e.g.~\cite{DG75,CV79}).

Next, we describe a refinement of Theorem~\ref{t:main} with applications to the theory of random waves. For this, we let $\{\varphi_j\}_{j=0}^\infty$ be an orthonormal basis of $L^2(M)$ such that 
\begin{equation}\label{e:efxs}
-\Delta_g \varphi_j = \lambda_j^2\varphi_j, \quad j = 0,1,2,\dotsc,
\end{equation}
and for $I\subset \mathbb{R}$ consider the orthogonal projection operator
\[
\Pi_I : L^2(M)\to \bigoplus\limits_{\lambda_j\in I}\ker(\Delta_g + \lambda_j^2).
\]
 The Schwartz kernel of $\Pi_I$ takes the form 
\begin{equation}\label{e:pi}
\Pi_I(x,y) = \sum\limits_{\lambda_j\in I} \varphi_j(x)\overline{\varphi_j(y)}, \qquad x,y \in M.
\end{equation}
The operator $\Pi_I$ plays a crucial role in studying both Weyl laws, since $\tr\Pi_I=\mathcal{N}(I)$, and monochromatic random waves.

We study the asymptotics as $\lambda\to \infty$ of spectral projectors of the form $\Pi_{I_\lambda}(x,y)$, where $I_\lambda$ is an interval, centered at $\lambda$, with length uniformly bounded from above and below. These spectral projectors appear as the covariance kernels of monochromatic random waves (see \eqref{e:cov}). The asymptotics of $\Pi_{I_\lambda}(x,y)$ are intimately connected to the dynamics of the geodesic flow on $(M,g)$.

The most classical random wave studies occur on the round sphere, $\mathbb{S}^{n}$, and flat torus, $\mathbb{T}^n$. In the case of the sphere, 
$$
\lambda_\ell^2=\ell(\ell+n-1)\,\quad\, \ell=0,1,\dots,
$$
and it is known that, with $\nu_\ell:=\ell+\frac{n-1}{2}$ and $0<\w<1$ for $x,y\in \mathbb{S}^{n}$,  with $d_g(x,y)\leq r_\ell$ and $\lim_{\ell\to \infty}r_\ell=0$, 
\begin{equation}
\label{e:sphere}
\Pi_{\{\lambda_\ell\}}(x,y)=\Pi_{[\nu_\ell-\w,\nu_\ell+\w]}(x,y)= \frac{\nu_\ell^{n-1}}{(2\pi)^{n/2}}\frac{J_{\frac{n-2}{2}}(|\nu_\ell d_g(x,y)|)}{(\nu_\ell d_g(x,y))^{\frac{n-2}{2}}} {+o(\nu_{\ell}^{n-1})}, \quad \ell\to \infty.
\end{equation}
Here, we write $d_g(x,y)$ for the Riemannian distance between $x$ and $y$ and $J_\alpha$ for the Bessel function of the first kind with index $\alpha$. 

Despite the fact that the dynamics of the geodesic flow on the $n$-dimensional flat torus are dramatically different than those on the sphere, we also have for $\w>0$, $x,y\in \mathbb{T}^{n}$ with $d_g(x,y)\leq r_\nu$, and $\lim_{\nu \to \infty}r_\nu =0$, 
\begin{equation}
\label{e:torus}
\Pi_{[\nu-\w,\nu+\w]}(x,y)= \frac{2\w\nu^{n-1}}{(2\pi)^{n/2}}\frac{J_{\frac{n-2}{2}}(|\nu d_g(x,y)|)}{(\nu d_g(x,y))^{\frac{n-2}{2}}}+{o(\nu^{n-1})}, \qquad \nu\to \infty.
\end{equation}
Indeed, one expects that the local behavior of $\Pi_{I_\lambda}$ is, in some sense, universal.

\begin{conjecture}
\label{conj1}
Let $(M,g)$ be a smooth, compact, Riemannian manifold of dimension $n$  without boundary and $x_0\in M$. Then, there exist $C>0$, a sequence $\nu_\ell\to \infty$, and a sequence $0<\w_\ell<C $ such that for any positive sequence $r_\ell \to 0$
\begin{equation}
\label{e:conjAsymp}
\sup_{x,\,y\in  B(x_0,r_\ell)}\Bigg|\frac{\Pi_{[\nu_\ell-\w_\ell,\nu_\ell+\w_\ell]}(x,y)}{\mathcal N([\nu_\ell-\w_\ell,\nu_\ell+\w_\ell])}- \frac{(2\pi)^{n/2}}{\textnormal{vol}(\mathbb{S}^{n-1})}\frac{J_{\frac{n-2}{2}}(|\nu_\ell d_g(x,y)|)}{(\nu_\ell d_g(x,y))^{\frac{n-2}{2}}}\Bigg|=o(1), \qquad \ell\to \infty,
\end{equation}
with analogous bounds for $C^k$ norms of the spectral projector. 
\end{conjecture}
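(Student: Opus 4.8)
The plan is to go through the half-wave propagator $e^{-it\sqrt{-\Delta_g}}$ and to exploit the freedom built into the conjecture --- the choice of the sequence $\nu_\ell$ and of the radii $\e_\ell$ --- to arrange that only the ``universal'' part of the wave kernel contributes to leading order. Fix $\rho\in\Cs(\R)$ with $\rho\ge 0$, $\int_\R\rho=1$ and $\supp\widehat\rho\subset(-\d,\d)$, where $\d$ is chosen small relative to the injectivity radius of $(M,g)$. Writing $dN_{x,y}:=\sum_j\delta_{\lambda_j}\,\varphi_j(x)\overline{\varphi_j(y)}$, so that $\Pi_{[a,b]}(x,y)=\int_a^b dN_{x,y}$, the $\rho$-smoothed spectral function is
\[
(\rho*dN_{x,y})(\nu)=\frac{1}{2\pi}\int_\R\widehat\rho(t)\,e^{it\nu}\,[e^{-it\sqrt{-\Delta_g}}](x,y)\,dt .
\]
Since $\widehat\rho$ is supported so close to $0$, only the singularity of the half-wave kernel at $t=0$ enters; inserting the Hadamard parametrix and applying stationary phase in $t$ yields, uniformly for $d_g(x,y)$ in a fixed neighborhood of $0$,
\[
(\rho*dN_{x,y})(\nu)=\frac{\nu^{n-1}}{(2\pi)^n}\int_{S_y^*M}e^{i\nu\langle\exp_y^{-1}(x),\omega\rangle_g}\,d\sigma(\omega)+O(\nu^{n-2})=\frac{\nu^{n-1}}{(2\pi)^{n/2}}\,\frac{J_{\frac{n-2}{2}}(\nu d_g(x,y))}{(\nu d_g(x,y))^{\frac{n-2}{2}}}+O(\nu^{n-2}),
\]
the second equality being \eqref{e:bessel} (see also Remark~\ref{r:Bessel}). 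Thus the \emph{smoothed} spectral function already displays the conjectured Bessel profile on \emph{every} $(M,g)$, and the whole difficulty is \emph{de-smoothing}: comparing $\rho*dN_{x,y}$ with the sharp window $\Pi_{[\nu-\e,\nu+\e]}(x,y)$.

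Replacing $\rho$ by the sharp indicator $\mathbf 1_{[-\e,\e]}$ reintroduces every singularity of the half-wave kernel: on the diagonal, the closed geodesics through $x$; off the diagonal (near it), the geodesic loops from $y$ to $x$. Estimating the $t\approx0$ part as above and grouping the rest by loop length $L$ gives a decomposition
\[
\Pi_{[\nu-\e,\nu+\e]}(x,y)=\Pi^{\mathrm{univ}}_{\nu,\e}(x,y)+E_{\nu,\e}(x,y),
\]
in which the universal term $\Pi^{\mathrm{univ}}_{\nu,\e}$ comes from the $t\approx0$ part of the propagator and, by construction, satisfies
\[
\Pi^{\mathrm{univ}}_{\nu,\e}(x,y)=\Pi^{\mathrm{univ}}_{\nu,\e}(x,x)\,\frac{(2\pi)^{n/2}}{\mathrm{vol}(\mathbb S^{n-1})}\,\frac{J_{\frac{n-2}{2}}(\nu d_g(x,y))}{(\nu d_g(x,y))^{\frac{n-2}{2}}}+O(\nu^{n-2}),
\]
while $E_{\nu,\e}$ is an oscillatory sum whose phases carry the loop lengths. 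On a general manifold $E_{\nu,\e}$ can be of size $\nu^{n-1}$ --- this is exactly why the Weyl remainder is sharp on Zoll manifolds --- so the conjecture is \emph{equivalent} to the assertion that one can choose $\nu_\ell\to\infty$ and $\e_\ell\in(0,c)$ with $E_{\nu_\ell,\e_\ell}(x,y)=o(\nu_\ell^{n-1})$ uniformly for $y$ with $d_g(x,y)=r_\ell$. Everything reduces to controlling $E_{\nu,\e}$ along a well-chosen sequence.

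There are two regimes in which this is known to be possible. If the set of loop directions in $S_x^*M$ has Liouville measure zero --- in particular for generic metrics, or at any $x$ with a small loopset --- then $E_{\nu,\e}(x,y)=o(\nu^{n-1})$ for fixed $\e$ and all large $\nu$ by a Duistermaat--Guillemin-type argument, so one may take $\e_\ell$ constant and $\nu_\ell$ arbitrary; this is essentially the Canzani--Hanin mechanism. At the opposite extreme, on a Zoll manifold the length spectrum is $\{kT:k\in\Z\}$ and the eigenvalues cluster in windows of width $O(\nu^{-1})$ about the points $\tfrac{2\pi}{T}(k+\tfrac{\beta}{4})$ (with $\beta$ the common Maslov index), so choosing $\nu_\ell$ at the cluster centers and $\e_\ell$ just below the cluster spacing makes the sharp window capture exactly one cluster, and $E_{\nu,\e}$ is then computed from the Toeplitz/Szeg\H{o} description of that cluster (second microlocalization along the flow, quantum Birkhoff normal form); this is the case carried out in the rest of the paper. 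The general conjecture calls for the intermediate situation, where the loopset has positive but not full measure: one would split $E_{\nu,\e}$ into the tail of the $t\approx0$ term, a ``short-loop'' part handled as in the first regime, and a part indexed by the length spectrum, and then select $\nu_\ell$ by a pigeonhole or equidistribution argument applied to the oscillatory sum $\sum_\gamma c_\gamma(x,y)\,e^{i\nu_\ell L_\gamma}$. \textbf{The main obstacle is precisely this last step.} No general principle guarantees a single sequence $\nu_\ell$ along which all loop contributions become simultaneously negligible, and --- crucially for the off-diagonal statement --- the coefficients $c_\gamma(x,y)$ are governed not by closed geodesics but by geodesic loops issuing from points near $x$, whose contributions need not cancel and are hard to control uniformly as $y\to x$. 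This is why the conjecture is open in general, and why the Zoll case --- the length spectrum as simple as possible, yet the loop contributions genuinely nonvanishing --- requires the separate argument that follows.
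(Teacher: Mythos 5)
You are addressing Conjecture~\ref{conj1}, which the paper itself does not prove: it is open in general, and the paper establishes it only in the Zoll case (Theorem~\ref{t:main_thm}), the non-self-focal case being due to Canzani--Hanin. Your write-up is consistent with that state of affairs: the smoothed-kernel computation and the identification of de-smoothing as the real difficulty are correct and parallel Section~\ref{s:smooth}. But as a proof it is incomplete by your own admission --- the general case is left as an acknowledged obstacle, and the Zoll case is deferred to ``the separate argument that follows'' without being given.

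Beyond the admitted incompleteness there is a genuine conceptual error in your reduction. You assert the conjecture is \emph{equivalent} to choosing $\nu_\ell,\ve_\ell$ so that the loop contribution satisfies $E_{\nu_\ell,\ve_\ell}(x,y)=o(\nu_\ell^{n-1})$. That condition is sufficient but not necessary, and it fails exactly in the case the paper treats: on a Zoll manifold the singularities at $t=kT$ contribute at the full order $\nu_\ell^{n-1}$ --- this is why a window of any admissible half-width $\ve$ centered at $\nu_\ell$ produces the factor $2\pi/T$ rather than $2\ve$ in \eqref{e:remainder} --- so (except possibly when $2\ve=2\pi/T$) no choice of $\nu_\ell,\ve_\ell$ makes $E$ negligible, yet the conjecture holds. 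The mechanism is also not the one you sketch (capturing one cluster and computing it by Toeplitz/Szeg\H{o} or a quantum Birkhoff normal form): the paper instead uses the Duistermaat--Guillemin/Weinstein fact that $U_{kT}=e^{-ik\b T}(I+Q_k)$ with $Q_k\in\Psi^{-1}(M)$ as in \eqref{Q_k_definition}, so \emph{every} loop term reproduces the same universal Bessel profile to leading order and, at $\lambda=\nu_\ell$, all terms are in phase; Poisson summation then yields the constant $2\pi/T$ (Proposition~\ref{p:smooth_prop}), and the de-smoothing is carried out not by normal forms but by the elementary $\ell^{-1/2}$ cluster estimate of Proposition~\ref{clustering_prop} combined with the local Weyl law. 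The correct general reduction is therefore that $E_{\nu_\ell,\ve_\ell}(x,y)-E_{\nu_\ell,\ve_\ell}(x,x)$ times the normalized Bessel factor be $o(\nu_\ell^{n-1})$, i.e.\ that the loop contributions either cancel or themselves carry the universal profile; pursuing your stated reduction would lead you to try to kill $E$, which is impossible in the Zoll setting.
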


Observe that for any $0<\w<1$ on the round sphere, we have
$$
\mathcal N([\nu_\ell-\w,\nu_\ell+\w])=\frac{\textnormal{vol}(\mathbb{S}^{n-1})}{(2\pi)^n}\nu_\ell^{n-1}+o(\nu_\ell^{n-1}),
$$
and on the torus we have
$$
\mathcal N([\nu_\ell-\w,\nu_\ell+\w])=2\w\frac{\textnormal{vol}(\mathbb{S}^{n-1})\nu_\ell^{n-1}}{(2\pi)^n}+o(\nu_\ell^{n-1}).
$$
Hence, in both cases, \eqref{e:sphere} and \eqref{e:torus} yield
$$\frac{\Pi_{[\nu_\ell-\w,\nu_\ell+\w]}(x,y)}{\mathcal N([\nu_\ell-\w,\nu_\ell+\w])}=\frac{(2\pi)^{\frac{n}{2}}}{\textnormal{vol}(\mathbb{S}^{n-1})}\frac{J_{\frac{n-2}{2}}(|\nu_\ell d_g(x,y)|)}{(\nu_\ell d_g(x,y))^{\frac{n-2}{2}}}+o(1),$$
and the conjecture holds in these examples.

In~\cite{CH15,CH18}, Canzani--Hanin showed that the asymptotics~\eqref{e:conjAsymp} hold whenever $x$ is a non-self focal point. That is, the set of directions $\xi \in S_{x}^*M$ that generate a geodesic loop that returns to $x$ has Liouville measure zero. As for the flat torus, in the case of non-self focal points, one can take any sequence $\nu_\ell\to \infty$ and $\w_\ell=1$.

On a Zoll manifold every point is, in some sense, the opposite of non-self focal.
Because of the sphere-like clustering of the spectrum, it is too much to hope that~\eqref{e:conjAsymp} holds for any choice of $\nu_\ell\to \infty$  and, as in the case of the Weyl law, we should instead work with spectral projectors for a well chosen sequence $\nu_\ell$. In particular, we take $\nu_\ell$ as in~\eqref{e:Zoll_eigs}. 

Our goal is to show that Conjecture~\ref{conj1} holds at certain points on Zoll manifolds. As discussed above, many Zoll manifolds of period $T$ are $SC_T$ manifolds. However, it is possible that some may have geodesics with shortest periods $T/N$ for some $N>1$ or closed geodesics of length $T$ that are not simple (i.e. that pass over the same base point more than once). Indeed, the (albeit trivial) example of the disjoint union of two Zoll manifolds with rationally related periods shows that, at least in principle, there may be a large set of closed geodesics of period smaller than $T$. However, these must have period $T/N$ for some fixed $N$.

In order to handle this type of situation, we formulate our next theorem in a way that allows for large sets of loops at times rationally related to $T$, as well as a zero-volume set of loops with non-rationally related looping time. 
For $N \in \mathbb N$, $T>0$, $\bad>0$, define the sets 
$$
K^{T}_N := \Big\{\frac{p}{q}T:\,p,q\in\N,\,1\le p < q\le N\Big\},\qquad K^{T}_{N,\bad}:=((-\bad,\bad)+K^T_N)\cup (-\infty,0)\cup (T,\infty).$$ 
Let $(M,g)$ be a smooth Zoll manifold of dimension $n \ge 2$ with minimal common period $T > 0$ and $\varphi_t:S^*M \to S^*M$ denote the geodesic flow for time $t$. Fix a metric on $T^*M$, and define
$$
\mathscr{L}_{N,\bad}(x_0):=\Big\{ \rho\in S^*_{x_0}M\,:\, \bigcup_{t\in (K^T_{N,\bad})^c} \varphi_t (B_{S^*M}(\rho,\bad))\cap S^*_{B(x_0,\bad)}M \neq \emptyset\Big\}.
$$
Note that a direction $\rho \in S^*_{x_0}M$ is in $\mathscr{L}_{N,\bad}(x_0)$ if there is a time $t \in [0,T]$ that is at least $\bad$-far from every element in $K_N^T$ such that $\varphi_{t}(B_{S^*M}(\rho,\bad))\cap T^*_{B(x_0,\bad)}M\neq \emptyset$.

Our next result gives pointwise estimates on the spectral projector near any $x_0\in M$ such that $\mu_{S^*M}\left(\mathscr{L}_{N,\bad}(x_0)\right)\overset{\bad\to 0}{\longrightarrow}0$.

\begin{thm}\label{t:main_thm_2}
Let $(M,g)$ be a smooth, compact, Zoll manifold of dimension $n \ge 2$ with minimal common period $T > 0$ and $\nu_\ell$ defined in~\eqref{e:Zoll_eigs}. For $x,y\in M$ and $\w>0$, define 
\begin{equation}\label{e:remainder}
R_{N,\w}(\ell;x,y):=\frac{1}{N}\sum\limits_{j=0}^{N-1}\Pi_{[\nu_{\ell+j}-\w,\nu_{\ell+j}+\w]}(x,y)-\frac{2\pi }{T} \frac{\nu_\ell^{n-1}}{(2\pi)^{n/2}}\frac{J_{\frac{n-2}{2}}(\nu_\ell d_g(x,y))}{(\nu_\ell d_g(x,y))^{\frac{n-2}{2}}}
\end{equation}
Let $N > 0$ and $x_0\in M$ such that
\begin{equation}\label{e:loopset_assumption}
\lim\limits_{\bad\to 0}\mu_{S^*M}\left(\mathscr{L}_{N,\bad}(x_0)\right) = 0.
\end{equation}
Then, for any $0<\w<\frac{2\pi}{T}$ and $\alpha,\beta \in \mathbb N^n$,
\begin{equation}\label{e:scaling_limit}
\lim\limits_{\delta\to 0^+}\limsup\limits_{\ell\to\infty}\sup\limits_{x,y\in B(x_0,\delta)}\left|\nu_\ell^{1-n-|\alpha|-|\beta|}\partial_x^\alpha\partial_y^\beta R_{N,\w}(\ell;x,y)\right| = 0.
\end{equation}
\end{thm}
For $SC_T$ manifolds, there are no sub-periodic loops and hence~\eqref{e:loopset_assumption} holds with $N=1$. We also note that, for $SC_T$ manifolds, the on-diagonal version of this result, without derivatives, was also proved in \cite[Theorem 2]{Zel97}.


As a corollary of Theorems~\ref{t:main} and~\ref{t:main_thm_2} we obtain that Conjecture~\ref{conj1} holds whenever~\eqref{e:loopset_assumption} holds on a Zoll manifold. 
\begin{cor}
\label{c:conjTrue}
Let $(M,g)$ be a smooth, connected, Zoll manifold of dimension $n \ge 2$ with minimal common period $T > 0$. Fix any $N > 0$ and $x_0\in M$ such that~\eqref{e:loopset_assumption} holds.
Then Conjecture~\ref{conj1} holds at $x_0$. 
\end{cor}

As discussed briefly before, a motivation for proving Theorem \ref{t:main_thm_2} is its application to the theory of random waves on manifolds.  A \emph{monochromatic random wave} on $(M,g)$ is a Gaussian random field of the form
\begin{equation}\label{e:psi}
\psi_{\lambda,\w}(x) 
:= \big(\mathcal{N}([\lambda-\w,\lambda+\w])\big)^{-\frac{1}{2}}\sum\limits_{\lambda_j\in[\lambda-\w,\lambda+\w]}a_j \varphi_j(x),
\end{equation}
where the $a_j$ are i.i.d. standard Gaussian random variables and the $\varphi_j$ are the eigenfunctions in \eqref{e:efxs}.

Monochromatic random waves were created to model eigenfunction behavior. Although $\psi_{\lambda,\w}$ is not an actual eigenfunction, it is expected to behave like one. (For a careful account of the history, see \cite{Can20, Wig22} and references there.) In particular, much research has been dedicated to understanding the behavior of the zero sets and critical points of random waves.  The corresponding features of deterministic eigenfunctions are very difficult to study, and their analysis  becomes much more tractable for the monochromatic random counterparts. 

The statistics of $\psi_{\lambda,\w}$ are completely determined by the associated two-point correlation function 
\begin{equation}\label{e:cov}
K_{\lambda,\w}(x,y) := \Cov\lp\psi_{\lambda,\w}(x),\psi_{\lambda,\w}(y)\rp = \frac{\Pi_{[\lambda-\w,\lambda+\w]}(x,y)}{\mathcal{N}([\lambda-\w,\lambda+\w])}, \qquad x,y\in M.
\end{equation}
Most research is typically done on the round sphere or the flat torus since $K_{\lambda,\w}$ is well understood for these spaces \cite{BMW20, CW17, KKW13, BCW19, Cam19, CMW16, NS09, RW08}. Studying features like the zero sets and critical points of $\psi_{\lambda,\w}$ relies on having asymptotics for $K_{\lambda,\w}(x,y)$ when $x,y \in B(x_0, \frac{1}{\lambda})$ with $x_0$ fixed. Although treating $K_{\lambda,\w}$ on general manifolds is quite challenging, Conjecture~\ref{conj1} would imply that, when the eigenvalue intervals defining the sum in \eqref{e:psi} are appropriately chosen,
 \begin{equation}\label{e:kernelConv}
 \lim\limits_{\ell\to\infty}\sup\limits_{|u|,|v|\le r_\ell}\left|\partial_u^\alpha\partial_v^\beta \left(K_{\nu_\ell,\w}\Big(\exp_{x_0}\big(\tfrac{u}{\nu_\ell}\big),\exp_{x_0}\big(\tfrac{v}{\nu_\ell}\big)\Big) - \frac{(2\pi)^{n/2}}{\textnormal{vol}(S^*\!M)}\frac{J_{\frac{n-2}{2}}(|u-v|)}{(|u-v|)^{\frac{n-2}{2}}} \right)\right|=0.
 \end{equation}
    Here, $\exp_{x_0}:T_{x_0}^*M \to M$ denotes the exponential map with footpoint at $x_0$.  Corollary~\ref{c:conjTrue} shows that for appropriately chosen intervals these asymptotics do, in fact, hold at points on a Zoll manifold where~\eqref{e:loopset_assumption} is satisfied.

%

Results about Conjecture~\ref{conj1} yield corresponding asymptotics for the covariance function of monochromatic random waves. Indeed, for a general manifold $(M,g)$, when the interval in \eqref{e:psi} is $[\lambda-\frac{1}{2}, \lambda+\frac{1}{2}]$, the asymptotics from~\cite{CH15, CH18} show that~\eqref{e:kernelConv} holds when the point $x_0$ is non self-focal. In the case where $(M,g)$ has no conjugate points~\cite{Keeler2020} (or more generally there are `very' few loops, see~\cite{CanzaniGalkowski2020}), the asymptotics in \eqref{e:kernelConv} hold at every point with a logarithmic improvement on the rate of decay to $0$.

In the language of Nazarov-Sodin \cite{NS16}, if the asymptotics in \eqref{e:kernelConv} hold at every $x_0\in M$,  then the random waves $\psi_{\lambda, \w}$ have translation invariant local
limits. For ensembles with such translation invariant local limits, Zelditch \cite{Zel09}, Nazarov-Sodin \cite{NS16},
Sarnak-Wigman \cite{SW19}, Gayet-Welschinger \cite{GW16}, Canzani-Sarnak \cite{CS19}, Canzani-Hanin \cite{CH20} as well
as others, prove detailed results on non-integral statistics of the nodal sets of
random waves. Such nodal set statistics include the number of connected components, Betti numbers, and topological types.

\subsection{Comments on the proof}
Since our long term goal is to approach Conjecture~\ref{conj1}, we aim to implement a method that uses only the dynamical information obtained from the fact that $(M,g)$ is Zoll. In particular, to prove Theorem \ref{t:main_thm_2}, we avoid using the fact that one can find $Q$, a pseudodifferential operator of order $-1$, such that $\sqrt{-\Delta_g+Q}$ has spectrum contained in $\cup_{\ell}\{\nu_\ell\}$~\cite{CV79}. This extra structure was used in~\cite{Zel97} to obtain a full asymptotic expansion of $\Pi_{[\nu_{\ell}-\w,\nu_\ell+w]}(x,x)$ in the $SC_T$ case.

Using standard Tauberian arguments, the analysis reduces to understanding the singularities of $e^{it\sqrt{-\Delta_g}}(x,y)$ for $t\in[-\sigma^{-1},\sigma^{-1}]$ with $\sigma\to 0$ very slowly as $\lambda\to \infty$. These singularities are located at times $t$ when there is a geodesic loop from $x$ to $y$. We analyze these singularities in three pieces:  1) We use the periodicity of the flow to study the singularities near $kT$, $k\in \mathbb{Z}$. 2) We show that zero measure sets of loops do not to contribute to the main asymptotics using methods similar to those in~\cite{SoZe:02,SoToZe:11,CH18,Wy:19,WyYaZe:22}. 3) By summing of the windows $[\nu_{\ell+j}-\w,\nu_{\ell+j}+\w]$, $j=0,\dots, N-1$, we are able to incorporate the function $\sin(\pi Nt/T)/\sin(\pi t/T)$ into the amplitude multiplying $e^{-it\sqrt{-\Delta_g}}$ (see \eqref{e:lambda*}). Using this extra structure, we then show that even positive measure sets of loops at times $\frac{jT}{N}$ do not contribute to the leading term of the asymptotics. Note that, when considering asymptotics for the counting function Theorem \ref{t:main}, only periodic trajectories need to be analyzed. Thus, since periodic trajectories must have minimal period $T/N$ for some $N$, we need no extra assumption to obtain asymptotics for the counting function.

\subsection{Organization of the paper} 
We begin in Section \ref{s:loops} by analyzing the implications of the assumption \eqref{e:loopset_assumption}, namely that it allows us to construct a pair of microlocal cutoffs which localize near, and respectively away from, the measure zero set of geodesics which have have looping times outside of $K_N$. Section \ref{s:smooth} proceeds with an analysis of the asymptotic contributions of the smooth spectral projector microlocalized away from all subperiodic loops. This is complemented by Section \ref{s:subperiodic} which studies the contributions near both types of subperiodic looping times: those which lie near $K_N$ (which may have positive measure), and those which do not (and must therefore have zero measure by assumption). We take a brief detour in Section \ref{s:onDiag} to prove some estimates on the spectral projector restricted to the diagonal, which are necessary for estimating the difference between the smooth and rough projectors. These on-diagonal estimates do not depend on the subperiodic loops assumption and slightly generalize similar results in \cite{DG75}. In Section \ref{s:proofs}, we assemble the pieces produced in the previous sections to complete the proof of \thmref{t:main_thm_2}. Theorem \ref{t:main} is proved in Section \ref{s:tmain}.

\bigskip
\noindent\textsc{Acknowledgements.}  Y.C. was supported by NSF CAREER Grant DMS-2045494 and NSF Grant DMS-1900519.  J.G. is grateful to the EPSRC for partial funding under Early Career Fellowship EP/V001760/1 and Standard Grant EP/V051636/1. B.K. was supported by postdoctoral fellowships through CRM-ISM and AARMS.

\section{Microlocalization Near Subperiodic Loops}\label{s:loops}

In this section, we discuss a technical construction that is essential for the asymptotic analysis in the subsequent parts of the proof of \thmref{t:main_thm_2}. Our assumption on  $\mathscr L_{N,\bad}(x_0)$ allows for the existence of subperiodic loops with looping times which do not lie near $K_N$, as long as the set of such loops is sufficiently small in measure. It is therefore crucial to construct a pair of pseudodifferential cutoffs which localize near and away from these loops. This idea is analogous to the constructions done in \cite{SoggeZelditch2002} and \cite{CH15}, although our procedure is somewhat different because we cannot rely solely on the upper semicontinuity of the reciprocal of the return-time function.  

To begin the construction, we first prove the following lemma, which shows that that the assumption \eqref{e:loopset_assumption} implies a more concrete fact in local coordinates. 
Below, $m$ denotes the Lebesgue measure on $S^{n-1}$.

\begin{lem}\label{lem:measure}
Fix $T>0$, let $K\Subset \mathbb{R}$ be a closed set, and define $K_\bad:=((-\bad,\bad)+K)\cup (-\infty,0)\cup (T,\infty)$. Let 
$$
A_{\bad}(x_0):=\Big\{ \rho\in S^*_{x_0}M\,:\, \bigcup_{t\in K_{\bad}^c} \varphi_t (B_{S^*M}(\rho,\bad))\cap S^*_{B(x_0,\bad)}M \neq \emptyset\Big\}.
$$
Fix $x_0\in M$, let $\gamma$ be a diffeomorphism from a neighborhood of $x_0$ into $\R^n$, and set
\begin{equation}\label{e:L_def}
L_{\bad}(x_0):=\{ \xi\in S^{n-1}\,:\, \exists x \in B(x_0, \bad), \; t\in K_\bad, \,  \Pi_M(\varphi_t(\gamma^{-1}(x),(\partial\gamma(x))^t\xi))\in B(x_0, \bad)\}.
\end{equation}
Then,
\begin{equation}\label{e:MeasureControl}
\lim_{\bad \to 0^+} \mu_{S^*_{x_0}M}(A_\bad(x_0))=0 \quad \Rightarrow \quad \lim_{\bad \to 0^+} m(L_\bad(x_0))=0.
\end{equation}
\end{lem}

\begin{proof}
Let $\alpha>0$ to be chosen small and consider $\{\xi_j\}_{j=1}^{N_\bad}$ an $\alpha\bad$-maximal separated set in $S^{n-1}$. Then there is $\mathfrak{D}>0$, depending only on $n$, and $\{\mathcal{J}_\ell\}_{\ell=1}^{\mathfrak{D}}$ such that
\begin{equation}\label{e:covering}
\begin{gathered}
S^{n-1}\subset \bigcup_{j=1}^{N_\bad}B(\xi_j, \alpha \bad),
\qquad   \{1,\dots, N_\bad\}=\bigcup_{\ell=1}^{\mathfrak{D}}\mathcal{J}_\ell,\\
 B(\xi_j,10\alpha\bad)\cap B(\xi_k ,10\alpha\bad)=\emptyset,\, i\neq k,\,i,k\in \mathcal{J}_\ell.
\end{gathered}
\end{equation}

First, we claim there exists $\alpha_0>0$ such that if $\alpha<\alpha_0$, then
\begin{equation}\label{e:canFindEp}
B(\xi_j,\alpha\bad)\cap L_{\alpha\bad}(x_0)\neq \emptyset \quad  \Rightarrow \quad  B(\iota(\xi_j),\tfrac{1}{2}\bad)\subset A_\bad(x_0),
\end{equation}
where $\iota:S^{n-1}\to S^*_{x_0}M$ is the map $\iota(\xi):=(x_0,(\partial\gamma(0))^t\xi)$.

Indeed, let $\eta\in L_{\alpha\bad}(x_0)$ with $|\eta-\xi_j|<\alpha\bad$. Then, there are $|x|<\alpha \bad$ and $t\in K_\bad^c$ such that 
$$
\varphi_t(\gamma(y),(\partial \gamma(y))^t\eta)\in B(x_0,\alpha\bad). 
$$
Now, $d(\gamma(y),x_0)\leq C\alpha \bad$, and $d(\iota(\eta),(\partial\gamma(y))^t\eta)<C\alpha \bad$.
Similarly, $d(\iota(\xi_j),\iota(\eta))<C\alpha\bad$, so that 
$$
(\gamma(y),(\partial\gamma(y))^t\eta)\in B(\iota(\xi_j),C\alpha \bad). 
$$
Choosing $\alpha_0<\frac{1}{2C}$, then implies that for any $\rho\in B(\iota(\xi_j),\frac{1}{2}\bad)\cap S^*_{x_0}M$, 
$$
(\gamma(y),(\partial\gamma(y))^t\eta)\in B(\rho, \bad)
$$
which, in turn, implies that 
$
B(\iota(\xi_j),\tfrac{1}{2}\bad)\subset A_\bad(x_0).
$
This proves the claim in \eqref{e:canFindEp}.

Notice that there is a $C>0$ such that for all $\ell$ and any $\rho\in S^*_{x_0}M$
\begin{equation}
\label{e:notTooMuchOverlap}
|\{j\in \mathcal{J}_\ell\,:\, B(\iota(\xi_j),\alpha\bad)\cap B(\rho, \bad)\}|\leq C \alpha^{1-n}. 
\end{equation}
Now, define
$$
\begin{gathered}
\mathcal{I}:=\{ j\in\{1,\dots,N_\bad\}\,:\, B(\xi_j,\alpha\bad)\cap L_{\alpha\bad}(x_0)\neq \emptyset\}, \\
\mathcal{I}_\ell:=\{ j\in\mathcal{J}_\ell\,:\, B(\xi_j,\alpha\bad)\cap L_{\alpha\bad}(x_0)\neq \emptyset\}.
\end{gathered}
$$
Then, by~\eqref{e:notTooMuchOverlap},
\begin{align*}
\mu_{S^*_{x_0}M}(A_\bad(x_0))
&\geq \max_{\ell}\Big|\bigcup_{j\in \mathcal{I}_\ell}B(\rho_j,\tfrac{1}{2}\bad)\Big|
\geq c\alpha^{n-1}\max_{\ell}|\mathcal{I}_\ell|\bad^{n-1}\\
&\geq c\alpha^{n-1}\bad^{n-1}|\mathcal{I}|/\mathfrak{D}
\geq c m(L_{\alpha\bad}(x_0))/\mathfrak{D}.
\end{align*}
\end{proof}

With \lemref{lem:measure} in hand, we construct the desired pseudodifferential cutoffs. Fix a point $x_0\in M$ which satisfies \eqref{e:loopset_assumption} and choose a diffeomorphism $\gamma$ from a neighborhood $U$ of $x_0$ into $\R^n.$ Then, by \lemref{lem:measure}, the measure of the sets $L_{\bad}(x_0)$ tends to 0 as ${\bad}\to 0^+.$  Thus, for any ${\bad} > 0$, there exists an open set $\mathcal O_{\bad}\subset S^{n-1}$ such that $L_{\bad}(x_0)\subseteq \mathcal O_{\bad}$ and $m(\mathcal O_{\bad}) < {\bad}/2.$ Hence, we can find some $\wt b_{\bad}\in C^\infty(S^{n-1})$ that is identically 1 on $\mathcal O_{\bad}$ and zero outside of a slightly larger open set $V_{\bad}$ with $m(V_{\bad}) < {\bad}.$ Now, let $\chi\in C^\infty(M)$ be supported in the coordinate neighborhood $U$ and equal to 1 on a slightly smaller neighborhood, and choose some $\beta\in C^\infty(\R)$ which vanishes on a neighborhood of 0 and is equal to 1 outside $[-1/2,1/2]$. Then, setting
\[b_{\bad}(x,\xi) = \chi(x)\beta(|\xi|)\wt b_{\bad}(\xi/|\xi|), \qquad c_{\bad}(x,\xi):= 1 - b_{\bad}(x,\xi)\]
we define the pseudodifferential operators $B_{\bad}$, $C_{\bad}$ 
\[B_{\bad} f(x) = \frac{1}{2\pi}\int\limits e^{i\langle\gamma(x)-\gamma(y),\xi\rangle}b_{\bad}(x,\xi)f(y)\,dy\,d\xi,\]
\[C_{\bad} f(x) = \frac{1}{2\pi}\int\limits e^{i\langle\gamma(x)-\gamma(y),\xi\rangle}c_{\bad}(x,\xi)f(y)\,dy\,d\xi.\]
Note that 
\begin{equation}\label{e:adjoints}
B_{\bad}+ C_{\bad}=I, \qquad B_{\bad}^*+C_{\bad}^*=I.
\end{equation}

Observe that
\begin{gather}
\label{e:suppc}
\supp c_{\bad}\cap \overline{L_{\bad}(x_0)},\\ 
\sup_{x\in M, r\geq \frac{1}{2}}\|1-c_{\bad}(x,\cdot)\|_{L^1(\mathbb{S}^{n-1})}\leq \bad.\label{e:l1C}
\end{gather}

\section{Analysis of the Smoothed Projector away from Subperiodic Loops}\label{s:smooth}
By the construction in the preceding section, for any fixed $\bad > 0,$ we have a microlocal partition of unity near $x_0$ in the form of $B_{\bad}$ and $\Ce$. By \eqref{e:adjoints},
\begin{equation}\label{e:localized}
\frac{1}{N}\sum\limits_{j=0}^{N-1}\Pi_{[\nu_{\ell+j}-\w,\nu_{\ell+j}\lambda+\w]}(x,y) = \frac{1}{N}\sum\limits_{j=0}^{N-1}\Pi_{[\nu_{\ell+j}-\w,\nu_{\ell+j}\lambda+\w]}(B_{\bad}^* + \Ces)(x,y)
\end{equation}
 Since $B_{\bad}^*$ has small microsupport, we expect the contribution from this term to be negligible from the perspective of the asymptotics. We prove this rigorously in \secref{s:subperiodic}. The bulk of our analysis is dedicated to studying the $\Ces$ term. In fact, we will study a smoothed version of this object, which involves a convolution with a suitably chosen Schwartz-class function.

We introduce $\rho\in\mathscr S(\R)$ with the property that $\wh\rho$ is supported in $[-2,2]$ and equal to one on $[-1,1]$. Then, for any $\sigma>0$, let $\rho_{\sigma}(\mu) = \frac{1}{\sigma}\rho(\mu/\sigma)$, so that
\begin{equation}\label{e:rho_ep}
\wh\rho_{\sigma}(t) = \wh\rho(\sigma t)
\end{equation}
is supported in $[-2/\sigma,2/\sigma]$ and equal to one on $[-1/\sigma,1/\sigma].$ 
The goal of this section is to study the asymptotic behavior of 
\[\frac{1}{N}\sum\limits_{j=0}^{N-1}\rho_{\sigma}\ast \Pi_{[\nu_{\ell+j}-\w,\nu_{\ell+j}+\w]}\Ces.\] 
This is done in  \propref{p:smooth_prop} below. In preparation for this result, in Section \ref{s:halfwave} we first rewrite $\rho_{\sigma}\ast \Pi_{[\lambda-\w,\lambda+\w]}$ in terms of the kernel of the half wave operator and its singularities. Later, in Section \ref{s:Qprop}, we find the asymptotic behavior of the kernel when localized to each singularity. We finally state and prove  \propref{p:smooth_prop} which combines these estimates to obtain asymptotics for the full projector.

\subsection{Singularities of the half-wave operator}\label{s:halfwave}
To study the smoothed projector, for any $\w,\sigma > 0$ we define
$
\psi_{\sigma}(\mu):=\rho_{\sigma}\ast \mathds 1_{[-\w,\w]}(\mu) ,
$
which is Schwartz-class and has Fourier transform
\begin{equation}\label{e:psi_defn}
\wh\psi_{\sigma}(t) = \wh\rho_{\sigma}(t)\frac{2\sin(t\w)}{t}.
\end{equation}
Then, if $U_t(x,y)$ denotes the kernel of the half-wave operator $U_t = e^{-it\sqrt{-\Delta_g}}$, we have
\begin{equation}\label{e:smooth_projector}
\rho_{\sigma}\ast \Pi_{[\lambda-\w,\lambda+\w]}\Ces(x,y) = \frac{1}{2\pi}\int\limits_{-\infty}^\infty e^{it\lambda}\wh\psi_{\sigma}(t)U_t\Ces(x,y)\,dt
\end{equation}
for all $\bad>0$, by Fourier inversion. Note that on the left-hand side of \eqref{e:smooth_projector}, the convolution is taken with respect to the $\lambda$ variable. From \cite{DG75}, we have that $U_t$ is a Fourier integral operator of class $I^{-\frac{1}{4}}(\R\times M,M;\mathcal C)$, where the canonical relation $\mathcal C$ is given by
\begin{align}\label{e:canonical_relation}
\begin{split}
\mathcal C &= \left\{\big((t,\tau),(x,\xi),(y,\eta)\big):\, (t,\tau)\in T^*\R\setminus \{0\},\right.\\
& \hspace{0.5in} \left.(x,\xi),(y,\eta)\in T^*M\setminus \{0\},\, \tau + |\xi_g| = 0,\,(x,\xi) = \Phi^t(y,\eta)\right\},
\end{split}
\end{align}
where $\Phi^t:T^*M\to T^*M$ denotes the geodesic flow.
For any $\lambda > 0$, 
\begin{align}
\label{e:lambda*}
\sum\limits_{j=0}^{N-1}\rho_\sigma\ast\Pi_{[\lambda+\frac{2\pi j}{T}-\w,\lambda+\frac{2\pi j}{T}+\w]}\Ces(x,y) & = \frac{1}{2\pi}\int\limits_{-\infty}^\infty \sum\limits_{j=0}^{N-1}e^{it(\lambda+\frac{2\pi j}{T})}\wh\psi_{\sigma}(t)U_t\Ces(x,y)\,dt\notag\\
& = \frac{1}{2\pi}\int\limits_{-\infty}^\infty e^{it(\lambda+\frac{(N-1)\pi}{T})}\frac{\sin\lp\frac{\pi N t}{T}\rp}{\sin\lp\frac{\pi t}{T}\rp}\wh\psi_{\sigma}(t)U_t\Ces(x,y)\,dt,
\end{align}
where the final equality follows from the Dirichlet kernel identity 
$\sum\limits_{j=0}^{N-1}e^{ijx} = e^{i(N-1)x/2}\frac{\sin(Nx/2)}{\sin(x/2)}.$
Later, we will set $\lambda = \nu_\ell$, for $\nu_\ell$ defined as in \eqref{e:Zoll_eigs}. We have
\[
\frac{1}{2\pi}\int\limits_{-\infty}^\infty e^{it(\lambda+\frac{(N-1)\pi}{T})}\frac{\sin\lp\frac{\pi N t}{T}\rp}{\sin\lp\frac{\pi t}{T}\rp}\wh\psi_{\sigma}(t)U_t\Ces(x,y)\,dt= \mathscr A_{\bad}(\lambda,\sigma;x,y) + \mathscr B_{\bad}(\lambda,\sigma;x,y)
\]
for
\begin{align}
\label{e:A_defn}\mathscr A_{\bad}(\lambda,\sigma;x,y) &:= \frac{1}{2\pi}\int\limits_{-\infty}^\infty e^{it(\lambda+\frac{(N-1)\pi}{T})}\frac{\sin\lp\frac{\pi N t}{T}\rp}{\sin\lp\frac{\pi t}{T}\rp}\wh\psi_{\sigma}(t)U_t\Ces(x,y)\sum\limits_{k\in\Z}\wh\rho(t-kT)\,dt,\\
\label{e:B_defn}\mathscr B_{\bad}(\lambda,\sigma;x,y)&:=\frac{1}{2\pi}\int\limits_{-\infty}^\infty e^{it(\lambda+\frac{(N-1)\pi}{T})}\frac{\sin\lp\frac{\pi N t}{T}\rp}{\sin\lp\frac{\pi t}{T}\rp}\wh\psi_{\sigma}(t)U_t\Ces(x,y)\lp 1 - \sum\limits_{k\in\Z}\wh\rho(t-kT)\rp\,dt.
\end{align}
We can think of $\mathscr A$ and $\mathscr B$ as being localized near to and away from times which are integer multiples of $T$, respectively. 
We first consider $\mathscr A_{\bad}(\lambda,\sigma;x,y)$. Changing variables, $t \mapsto t+kT$, 
\begin{equation}\label{e:kT_shift}
\begin{aligned}
&\mathscr A_{\bad}(\lambda,\sigma;x,y)=\\
&=\sum\limits_{k\in\Z}\frac{e^{ikT(\lambda+\frac{(N-1)\pi}{T})}}{2\pi}\int\limits_{-\infty}^\infty e^{it(\lambda+\frac{(N-1)\pi}{T})}{{(-1)^{(N-1)k}}}\frac{\sin\lp\frac{\pi N t}{T}\rp}{\sin\lp\frac{\pi t}{T}\rp}\wh\psi_{\sigma}(t+kT)\wh\rho(t)U_{t+kT}\Ces(x,y)\,dt \\
&= \sum\limits_{k\in\Z}\frac{e^{ikT\lambda }}{2\pi}\mathcal F^{-1}_{t\mapsto \lambda}\lp \wh f_k(t) U_{t+kT}\Ces(x,y)\rp,
\end{aligned}
\end{equation}
where we define 
\begin{equation}\label{e:f_k}
\wh f_k(t) = {e^{i t\frac{(N-1)\pi}{T}}}\frac{\sin\lp\frac{\pi N t}{T}\rp}{\sin\lp\frac{\pi t}{T}\rp}\wh\psi_{\sigma}(t+kT)\wh\rho(t),
 \end{equation}
 and $\mathcal F^{-1}_{t\mapsto\lambda}$ is the inverse Fourier transform mapping $t$ to $\lambda$. Then, we can use that ${U_{s}\varphi_j = e^{-is\lambda_j}\varphi_j}$ to obtain
\begin{align*}
\mathcal F^{-1}_{t\mapsto\lambda}\lp\wh f_k(t)U_{t+kT}\Ces(x,y)\rp &=\mathcal F^{-1}_{t\mapsto\lambda}\lp \wh f_k(t)\sum\limits_{j=0}^\infty e^{-i\lambda_j(t+kT)}\varphi_j(x)\overline{\Ce\varphi_j(y)}\rp\\
& = f_k\ast\lp\sum\limits_{j=0}^\infty\delta(\lambda-\lambda_j)e^{-ikT\lambda_j}\varphi_j(x)\overline{\Ce\varphi_j(y)}\rp\\
& = f_k\ast \partial_\lambda\lp\sum\limits_{\lambda_j\le\lambda}\varphi_j(x)\overline{\Ce U_{-kT}\varphi_j(y)}\rp\\
& = \partial_\lambda\lp f_k\ast\Pi_{[0,\lambda]} U_{kT}\Ces(x,y)\rp. 
\end{align*}
Therefore, if $d(x,y)\leq \delta$, \eqref{e:kT_shift} yields
\begin{equation}
\mathscr A_{\bad}(\lambda,\sigma;x,y)  = \sum\limits_{k\in\Z}e^{ikT\lambda}\partial_\lambda(f_k\ast\Pi_{[0,\lambda]} U_{kT}\Ces(x,y)).
\end{equation}
By \cite[page 53]{DG75},
with $\mathfrak{a}$ as in \eqref{e:Zoll_eigs} and
\begin{equation}\label{e:b}
\b := \frac{\pi\mathfrak{a}}{2T},
\end{equation}
we have that $U_t - e^{i\b T}U_{t+T}$ is a Fourier integral operator of one order lower than $U_t$, namely $-\frac{1}{4} - 1$. In particular, we have that $U_0 - e^{i\b T}U_T$ is a pseudodifferential operator of order $-1$, and 
\[U_{0} - e^{ik\b T}U_{kT}\in \Psi^{-1}(M),\]
for any $k\in \Z$. Since $U_0$ is the identity map, we can write
\begin{equation}\label{Q_k_definition}
U_{kT} = {e^{-ik\b T}(I + Q_k)}
\end{equation}
for $Q_k\in\Psi^{-1}(M)$ with polyhomogeneous symbol. Thus, we obtain
\begin{equation}\label{e:smooth_sum}
\mathscr A_{\bad}(\lambda,\sigma;x,y) = \sum\limits_{k\in\Z}e^{ikT(\lambda-\mathfrak b)}\partial_\lambda(f_k\ast\Pi_{[0,\lambda]}(I+Q_k)\Ces)(x,y).
\end{equation}
Therefore, we must determine the asymptotic behavior of  $\partial_\lambda(f_k\ast\Pi_{[0,\lambda]}(I+Q_k)\Ces)$.

\begin{rmk}\label{r:finite}\textnormal{
Note that for each fixed $\sigma,\delta > 0$, the $\wh f_k$ are identically 0 for sufficiently large $k$. Therefore, the sum in \eqref{e:smooth_sum} is finite for each $\sigma,\delta> 0$.
}
\end{rmk}

\subsection{Pseudodifferential perturbations of the Spectral Projector}\label{s:Qprop}

The goal of this section is to find the asymptotic behavior of
$$
\partial_\lambda(f_k\ast \Pi_{[0,\lambda]}(I+Q_k)\Ces)(x,y)
$$
for each $k$. We are interested in working with points $x,y\in M$ for which  $d_g(x,y)$ is small. Therefore, we will assume that we work with coordinates $y=(y_1, \dots, y_n)$ on $M$ and dual coordinates $(\xi_1, \dots, \xi_n)$ on $T_y^*M$. The Riemannian volume form in these coordinates takes the form $\sqrt{|g_y|}dy$, where $|g_y|$ denotes the determinant of the matrix representation of $g(y)$. We also define the function
\[
\Theta(x,y):=|\text{det}_g D_{\exp_x^{-1}(y)}\exp_x|,
\]
where the subscript $g$ means that we use the metric to choose an orthonormal basis on $T_{\exp_x^{-1}(y)}(T_xM)$ and $T_y^*M$ (c.f. \cite[Chapter 2, Proposition C.III.2]{BGM}). The determinant is then independent of the choice of such a basis. We note that $\Theta(x,y)=\sqrt{|g_x|}$ in normal coordinates centered at $y$.

If $\xi \in T_y^*M$ is represented as $\xi=r\omega$ with $(r, \omega)\in (0,+\infty)\times S_y^*M$, then we endow $S_y^*M$ with the measure $d\omega$ such that $d\xi=r^{n-1}  d\omega dr$. 

\begin{rmk}\label{r:Bessel}\textnormal{
We note that $d\omega$ is not a coordinate invariant measure, but it behaves like a density in $y$ under changes of coordinates. Thus, $d\omega$ should be regarded as a measure taking values in the space of densities on $M$. Despite this, we note that for $v\in \R^n$
\begin{equation}\label{e:bessel}
\frac{1}{(2\pi)^{n/2}} \frac{J_{\frac{n-2}{2}}(|v|)}{|v|^{\frac{n-2}{2}}}=\frac{1}{(2\pi)^n}\int_{S^{n-1}}e^{i \langle v, \omega \rangle} \, d\sigma_{_{\!\mathbb S^{n-1}}}\!(\omega).
\end{equation} 
Hence
\[
\frac{1}{(2\pi)^n}\int\limits_{S_y^*M}e^{i\lambda\langle\exp_y^{-1}(x),\omega\rangle_{g}}\frac{d\omega}{\sqrt{|g_y|}} =\frac{1}{(2\pi)^{n/2}}\frac{J_{\frac{n-2}{2}}(|\lambda d_g(x,y)|)}{(\lambda d_g(x,y))^{\frac{n-2}{2}}},
\]
and the right hand side is clearly coordinate invariant. Here, we used that $d\omega=|g_y|^{1/2}d\sigma_{_{\!S^{n-1}}}$  and that in local coordinates $\langle\exp_y^{-1}(x),\omega\rangle_{g}=\langle g_y^{-1/2}\exp_y^{-1}(x),g_y^{-1/2}\omega\rangle_{_{\R^n}}$
with $g_y^{-1/2}\omega\in \mathbb S^{n-1}$ and $|g_y^{-1/2}\exp_y^{-1}(x)|_{_{\R^n}}=d_g(x,y)$.
}
\end{rmk}

\begin{prop}\label{Q_prop}
Let $(M,g)$ be a compact, smooth Riemannian manifold of dimension ${n\ge 2}$ without boundary.  Let $C$ and $Q$ be pseudodifferential operators with polyhomogeneous symbols $c$ and $q$
of orders 0 and $-1$, respectively.  Fix $\delta \leq \frac{1}{2}\textnormal{inj}(M,g)$. Then, for each pair of multi-indices $\alpha,\beta \in \mathbb N^n$, there exist constants $C_1,C_2,\mu_0>0$, such that for any function $f\in C^\infty(\R)$ with $\wh f$ smooth and compactly supported, and any $x,y\in M$ with $d_g(x,y)\le \delta$ we have
\begin{align*}
&\Theta^{\frac{1}{2}}(x,y)\partial_\mu\lp  f\ast\Pi_{[0,\mu]}(I+ Q)C\rp(x,y) \\
&\hskip 1.5in = \frac{\mu^{n-1}\wh f(0)}{(2\pi)^{n}}\int\limits_{S_y^*M}e^{i\mu\langle\exp_y^{-1}(x),\omega\rangle_{g_y}}c(y,\omega) \frac{d\omega}{\sqrt{|g_y|}} + R(\mu,x,y),
\end{align*}
with
\begin{align}\label{Q_prop_R_est}
\begin{split}
  \sup\limits_{d_g(x,y)\le \delta}| \partial_x^\alpha\partial_y^\beta R(\mu,x,y)| &\le C_1\delta \|\partial_t {\wh f}\|_{L^\infty([-\delta,\delta])}\mu^{n-1+|\alpha|+|\beta|} + C_2\mu^{n-2+|\alpha|+|\beta|}
\end{split}
\end{align}
for all $\mu \ge \mu_0$. 
Here, $C_1$ is independent of $\delta$, $Q$ and $f$.
\end{prop}

\begin{proof}
We prove the statement first in the case where $\alpha = \beta = 0$. Observe that %
\begin{equation}\label{e:smooth_projector_Q}
\partial_\mu\lp f\ast \Pi_{[0,\mu]} (I+Q)C\rp(x,y) = \frac{1}{2\pi}\int\limits_{-\infty}^\infty e^{it\mu} \wh f(t)U_t (I+Q)C(x,y)\,dt.
\end{equation}
Using the parametrix for $U_t$ constructed in \cite[Proposition 8]{CH15}, we have that if $d_g(x,y)\le \frac{1}{2}\textnormal{inj}(M,g)$, then
\begin{equation}\label{e:parametrix}
U_t(x,y) = \frac{\Theta^{-\frac{1}{2}}(x,y)}{(2\pi)^n}\int\limits_{T_y^*M}e^{i\langle\exp_y^{-1}(x),\xi\rangle_{g_y} - it|\xi|_{g_y}} A(t,y,\xi)\frac{d\xi}{\sqrt{|g_y|}}
\end{equation}
modulo smoothing kernels, for some symbol $A\in S^0$ with a polyhomogeneous expansion $A\sim\sum\limits_{j=0}^\infty A_{-j}$. In particular, $A_0(t,y,\xi) \equiv 1$ for all $t$, and when $t = 0$, $A_{-j}(0,y,\xi) = 0$ for all $j\ge 1$. Since $C$ and $Q$ are pseudodifferential, we can use the same parametrix construction to write
\begin{equation}\label{e:UQ}
U_t(I+Q)C(x,y) = \frac{\Theta^{-\frac{1}{2}}(x,y)}{(2\pi)^n}\int\limits_{T_y^*M}e^{i\langle\exp_y^{-1}(x),\xi\rangle_{g_y} - it|\xi|_{g_y}} D(t,y,\xi)\frac{d\xi}{\sqrt{|g_y|}}
\end{equation}
for some $D\in S^{0}$. Note that since the principal symbol of $U_t$ is identically 1 and $C,Q$ are pseudodifferential, the principal symbols of $U_tC$ and $U_tQC$ are each independent of $t$. At $t = 0,$ we have $U_0C = C$ and $U_0QC = QC$, and hence the principal symbol of $U_tC$ is $c_0(y,\xi)$ for all $t$. Furthermore, since the subprincipal symbol of $C$ is identically zero and all lower order terms of $A$ vanish at $t = 0$, we have that the symbol of $U_t(I+Q)C$ satisfies
\begin{equation}\label{e:D_symbol}
D(t,y,\xi) - c_0(y,\xi) -D_{-1}(t,y,\xi)\in S^{-2}.
\end{equation}
 where  $D_{-1} \in S^{-1}$ is homogeneous degree $-1$.
From \eqref{e:smooth_projector_Q} and \eqref{e:UQ} we obtain
\begin{align}
\begin{split}\label{e:smooth_projector_parametrix}
& \partial_\mu(f\ast \Pi_{\mu}(I+ Q)C)(x,y)\\
& \hskip 1in= \frac{\Theta^{-\frac{1}{2}}(x,y)}{(2\pi)^{n+1}}\int\limits_{-\infty}^\infty\int\limits_{T_y^*M} e^{it\mu}e^{i\langle\exp_y^{-1}(x),\xi\rangle_{g_y} - it|\xi|_{g_y}}\wh f(t)D(t,y,\xi)\frac{d\xi dt}{\sqrt{|g_y|}} + \mathcal O(\mu^{-\infty}).
\end{split}
\end{align}
To control the integral on the right-hand side above, we change variables via $\xi\mapsto \mu r\omega$ for $(r,\omega)\in \R^+\times S_y^*M$, which yields that the LHS of \eqref{e:smooth_projector_parametrix} is
\begin{equation}\label{e:before_stat}
\frac{\mu^{n}}{(2\pi)^{n+1}}\int\limits_{-\infty}^\infty\int\limits_0^\infty \wh f(t)e^{i\mu t(1-r)} r^{n-1}\lp\int\limits_{S_y^*M}e^{i\mu r\langle\exp_y^{-1}(x),\omega\rangle_{g_y}}D(t,y,\mu r\omega)\frac{d\omega}{\sqrt{|g_y|}}\rp\,dr\,dt.
\end{equation}
Noting that since the phase is nonstationary for $r\ne 1$ we may introduce a cutoff function $\zeta\in C_c^\infty(\R)$ which is equal to one on a neighborhood of $r = 1$, and supported in $[\frac{1}{2},\frac{3}{2}].$ This results in an error which is $\mathcal O(\mu^{-\infty})$ as $\mu\to\infty.$ 

Let $S(t,y,\xi) = c_0(y,\xi) + D_{-1}(t,y,\xi)$ be the first two terms in the polyhomogeneous expansion of $D$. Since $D - S$ is a symbol of order $-2$, we have $|D(t,y,\mu r\omega) - S(t,y,\mu r\omega)| \le C\mu^{-2}$ uniformly for all $t,y$. Combining this fact with an application of stationary phase in $(t,r)$, we see that the LHS of \eqref{e:smooth_projector_parametrix} is equal to
\begin{equation}\label{e:n-3_error}
\frac{\mu^n}{(2\pi)^{n+1}}\int\limits_{-\infty}^\infty\int\limits_{-\infty}^\infty \wh f(t)e^{i\mu t(1-r)} r^{n-1}\zeta(r)\lp\int\limits_{S_y^*M}e^{i\mu r\langle\exp_y^{-1}(x),\omega\rangle_{g_y}}S(t,y,\mu r\omega)\,\frac{d\omega}{\sqrt{|g_y|}}\rp\,dr\,dt + \mathcal O(\mu^{n-3}),
\end{equation}
where $\zeta\in C_c^\infty(\mathbb R)$ is a cut-off function that is equal to $1$  near $r=1$ and vanishes for $r\notin [\tfrac{1}{2}, \tfrac{3}{2}]$. 
Notice that by homogeneity in the fiber variable, we have that for any $(y,\eta)\in T^*M$,
\begin{equation}\label{e:spherical_decomp}
\int\limits_{S_y^*M}e^{i\langle\eta,\omega\rangle_{g_y}}S(t,y,\mu r\omega)\frac{d\omega}{\sqrt{|g_y|}} = \int\limits_{S_y^*M}e^{i\langle\eta,\omega\rangle_{g_y}}\lp c_0(y,\omega)+ \frac{1}{\mu r}D_{-1}(t,y,\omega) \rp\,\frac{d\omega}{\sqrt{|g_y|}}.
\end{equation}
Then, following the proof of \cite[Theorem 1.2.1]{SoggeBook2014}, there exist smooth functions $a_{\pm}\in C^\infty(T^*M)$ and $b_\pm\in C^\infty(\R\times T^*M)$ such that 
\begin{equation}\label{e:sogge_1}
\int\limits_{S_y^*M}e^{i\langle\eta,\omega\rangle_{g_y}}c_0(y,\omega)\,\frac{d\omega}{\sqrt{|g_y|}} = \sum\limits_{\pm}e^{\pm i|\eta|_{g_y}}a_\pm(y,\eta),
\end{equation}
and 
\begin{equation}\label{e:sogge_2}
\int\limits_{S_y^*M}e^{i\langle\eta,\omega\rangle_{g_y}}D_{-1}(t,y,\omega) \frac{d\omega}{\sqrt{|g_y|}}= \sum\limits_{\pm}e^{\pm i|\eta|_{g_y}}b_{\pm}(t,y,\omega),
\end{equation}
satisfying the estimates
\begin{equation}\label{e:ab_estimates}
|\partial_\eta^\gamma a_\pm(y,\eta)|\le C_\gamma (1+|\eta|_{g_y})^{-\frac{n-1}{2}-|\gamma|},\qquad |\partial_t^k\partial_\eta^\gamma b_\pm(t,y,\eta)|\le C_{\gamma,k}(1 + |\eta|_{g_y})^{-\frac{n-1}{2} -|\gamma|}
\end{equation}
for any multi-index $\gamma$, any integer $k\ge 0$, and some constants $C_\gamma,\, C_{\gamma,k}$ which are independent of $t,\,y,$ and $\eta.$ Therefore, by \eqref{e:smooth_projector_Q}, \eqref{e:parametrix},  \eqref{e:UQ}, \eqref{e:sogge_1} and \eqref{e:sogge_2}
\begin{equation}\label{e:stat_phase_form}
\partial_\mu(f\ast \Pi_{[0,\lambda]}(I+ Q)C)(x,y)
=\frac{\mu^{n}}{(2\pi)^{n+1}}\sum\limits_{\pm}\int\limits_{\R}\int\limits_0^\infty e^{i\mu\psi_\pm(t,r,x,y)}g_\pm(t,r,x,y,\mu)\,dr\,dt,
\end{equation}
where $\psi_\pm(t,r,x,y)= t(1-r)\pm rd_g(x,y)$ and 
\begin{equation}\label{e:g}
g_\pm(t,r,x,y,\mu) = r^{n-1}\zeta(r)\wh f(t)\lp a_\pm(y,\mu r\exp_y^{-1}(x)) + \frac{1}{\mu r}b_\pm(t,y,\mu r \exp_y^{-1}(x))\rp.
\end{equation}
Observe that for any fixed $x,y\in M$, the critical points of $\psi_\pm$ occur at $(t_c^\pm,r_c^\pm) = (\pm d_g(x,y),1)$, and that 
\[
\det\lp\textnormal{Hess}\,\psi_\pm(t_c^\pm,r_c^\pm,x,y)\rp = 1.
\]
Therefore, by the method of stationary phase, we see that 
\begin{equation*}
\label{F_bound}
\begin{aligned}
&\partial_\mu(f\ast \Pi_{[0,\lambda]}(I+ Q)C)(x,y)\\
&\qquad=\frac{\mu^{n-1}}{(2\pi)^n}\sum\limits_{\pm}e^{\pm i\mu d_g(x,y)}\lp g_\pm(t_c^\pm,r_c^\pm,x,y,\mu) -\frac{i}{\mu}\partial_r\partial_t g_\pm(t_c^\pm,r_c^\pm,x,y,\mu)\rp  + \mathcal{O}(\mu^{n-3}).
\end{aligned}
\end{equation*}
From \eqref{e:g} and \eqref{e:ab_estimates} we have that 
\begin{align*}
\left|\partial_r\partial_t g_\pm(t_c^\pm,r_c^\pm,x,y,\mu)\right|&\le C_1|\partial_t\wh f(\pm d_g(x,y))| + \frac{C_2}{\mu}\lp|\wh f(\pm d_g(x,y))| + |\partial_t\wh f(\pm d_g(x,y))|\rp\\
& \le C_1\|\partial_t\wh f\|_{L^\infty([-\delta,\delta])} + \frac{C_2}{\mu}\|\wh f\|_{C^1([-\delta,\delta])},
\end{align*}
and we remark that $C_1$ is independent of $Q$ due to the definition of $a_\pm$.
Therefore,
\begin{align}
&\Theta^{\frac{1}{2}}(x,y)\partial_\mu\lp f\ast \Pi_{[0,\mu]} (I+Q)C\rp(x,y) \notag\\
&\qquad= \frac{\mu^{n-1}}{(2\pi)^n}\sum\limits_{\pm}e^{\pm i\mu d_g(x,y)}\wh f(\pm d_g(x,y))\lp a_\pm(y,\mu \exp_y^{-1}(x)) + \frac{1}{\mu} b_\pm(t_c^\pm,y,\mu\exp_y^{-1}(x))\rp \notag\\
&\qquad \;\; + R_1(\mu,x,y), 
\end{align} 
where 
\[
\sup\limits_{d_g(x,y)\le \delta}|R_1(\mu,x,y)| \le C_1\Conedot{\wh f}{[-\delta,\delta]}\mu^{n-2} + C_2\Cone{\wh f}{[-\delta,\delta]}\mu^{n-3} + \mathcal O(\mu^{n-3}),
\]
with $C_1$ independent of $Q$.
Next, let us Taylor expand $\wh f$ near 0, which yields
\[
\wh f(\pm d_g(x,y)) = \wh f(0) \pm d_g(x,y)\partial_t\wh f(s_\pm) 
\]
for some $s_\pm$ between $0$ and $\pm d_g(x,y).$ Combining this with the fact that
\begin{equation}\label{e:sogge_int}
\sum\limits_{\pm}e^{\pm i\mu d_g(x,y)} a_\pm(y,\mu\exp_y^{-1}(x)) =  \int\limits_{S_y^*M}e^{i\mu\langle\exp_y^{-1}(x),\omega\rangle}c_0(y,\omega)\,\frac{d\omega}{\sqrt{|g_y|}},
\end{equation}
we obtain
\begin{align}\label{e:R1_R2}
&\Theta^{\frac{1}{2}}(x,y)\partial_\mu\lp \wh f\ast\Pi_{[0,\mu]}(I+Q)C\rp(x,y) \notag\\
&\qquad  = \frac{\mu^{n-1}\wh f(0)}{(2\pi)^{n}}\lp \int\limits_{S_y^*M}e^{i\mu\langle\exp_y^{-1}(x),\omega\rangle_{g_y}}c_0(y,\omega)\,\frac{d\omega}{\sqrt{|g_y|}}+ \sum\limits_{\pm}e^{\pm i\mu d_g(x,y)}b_\pm(t_c^\pm,y,\mu\exp_y^{-1}(x))\rp \notag\\
&\qquad \;\;  + R_1(\mu,x,y) + R_2(\mu,x,y),
\end{align}
where $R_1$ is as above, and $R_2$ satisfies
\[\sup\limits_{d_g(x,y)\le \delta}|R_2(\mu,x,y)| \le  \delta\|{\partial_t \wh f}\|_{L^\infty([-\delta,\delta])}(C_0\mu^{n-1}+C_1\mu^{n-2})\]
for some $C_0>0$ which is independent of $Q$ and $C_1>0$. Next, we Taylor expand 
\[b_\pm(t_c^\pm,y,\mu\exp_y^{-1}(x)) = b_\pm(0,y,\mu\exp_y^{-1}(x)) \pm d_g(x,y)\partial_tb_\pm(s_\pm',y,\mu\exp_y^{-1}(x))\]
for some $s_\pm'$ between 0 and $t_c^\pm = \pm d_g(x,y)$. Recalling \eqref{e:ab_estimates}, we have that
\[|\partial_t b_\pm(s_\pm,y,\mu\exp_y^{-1}(x))| \le C_2(1 + \mu d_g(x,y))^{-\frac{n-1}{2}},\]
since $|s_\pm| \le d_g(x,y)$. Therefore, we obtain
\begin{align}\label{e:R3}
&\frac{\mu^{n-2}\wh f(0)}{(2\pi)^m}\sum\limits_{\pm}e^{\pm i\mu d_g(x,y)}b_\pm(t_c^\pm,y,\mu\exp_y^{-1}(x)) \notag\\
& \hskip 0.8in= \frac{\mu^{n-2}\wh f(0)}{(2\pi)^n}\int\limits_{S_y^*M}e^{i\mu\langle\exp_y^{-1}(x),\omega\rangle} D_{-1}(0,y,\omega)\,\frac{d\omega}{\sqrt{|g_y|}} + R_3(\mu,x,y), 
\end{align}
where
\[
\sup\limits_{d_g(x,y)\le \delta}|R_3(\mu,x,y)| \le C_2\delta\wh f(0)\mu^{n-2},
\]
after potentially increasing $C_2.$ Therefore, we have that \eqref{e:R1_R2} and \eqref{e:R3} yield
\begin{align*}
&\Theta^{\frac{1}{2}}(x,y)\partial_\mu\lp \wh f\ast\Pi_{[0,\mu]} (I+Q)C\rp(x,y)\\
 &\hskip 1.5in = \frac{\mu^{n-1}\wh f(0)}{(2\pi)^{n}}\int\limits_{S_y^*M}e^{i\mu\langle\exp_y^{-1}(x),\omega\rangle_{g_y}}c_0(y,\omega)\,\frac{d\omega}{\sqrt{|g_y|}}+ \wt R(\mu,x,y),
\end{align*}
where $\wt R$ satisfies
\begin{align*}
 \sup\limits_{d_g(x,y)\le \delta}|\wt R(\mu,x,y)| &\le C_1\delta\Conedot{\wh f}{[-\delta,\delta]}\mu^{n-1} +C_2\Conedot{\wh f}{[-\delta,\delta]}\mu^{n-2}\\
 & \hskip 0.5in + C_3 \delta\wh f(0)\mu^{n-2} + C_4 \Cone{\wh f}{[-\delta,\delta]}\mu^{n-3} +\mathcal O(\mu^{n-3}),
\end{align*}
for some $C_1,C_2,C_3,C_4>0$, with $C_1$ independent of $\delta,f,$ and $Q$. This completes the proof in the case where $\alpha = \beta = 0$.

To include derivatives in $x,y$, we observe that
\[\partial_x^\alpha\partial_y^\beta e^{i\langle\exp_y^{-1}(x),\xi\rangle} = \mathcal O(|\xi|^{|\alpha| + |\beta|})\]
as $|\xi|\to \infty$. Therefore, we can repeat the preceding argument where the orders of the symbols involved are increased by at most $|\alpha| + |\beta|$ to obtain the desired result.

\end{proof}

\subsection{Asymptotics for $\mathscr A_{\bad}(\lambda,\sigma;x,y)$}\label{s:rhoPi}
With \propref{Q_prop} in hand, we are equipped to prove the main result of this section, namely the asymptotic behavior of $\mathscr A_{\bad}(\lambda,\sigma;x,y)$, which accounts for the contributions near each multiple of the period $T$. In particular, we set $\lambda = \nu_\ell $ for $\ell = 0,1,2,\dotsc.$ Then, we can define
\begin{equation}\label{e:R_ep}
R_{\bad}(\ell,\sigma; x,y):= \mathscr A_{\bad}(\nu_\ell,\sigma;x,y) - \frac{2\pi N}{T}\cdot\frac{\nu_\ell^{n-1}}{(2\pi)^{n}}\int\limits_{S_y^*M}e^{i\nu_\ell\langle\exp_y^{-1}(x),\omega\rangle_g}\frac{d\omega}{\sqrt{|g_y|}}.
\end{equation}
\begin{prop}\label{p:smooth_prop}
Let $(M,g)$ be a smooth Zoll manifold with minimal common period $T>0.$ Fix $0 < \w < \frac{2\pi}{T}$. Let $\mathscr A_{\w,\bad}$ as in~\eqref{e:A_defn} with $C_{\bad}$ satisfying~\eqref{e:l1C}.  Then, for any multi-indices $\alpha,\beta \in \mathbb N^n$, 
\[
\lim\limits_{\sigma\to 0^+}\lim_{\bad\to 0^+}\lim\limits_{\delta\to 0^+}\limsup\limits_{\ell\to \infty}\sup\limits_{d_g(x,y)\le \delta}\left|\frac{1}{\nu_\ell^{n-1+|\alpha|+|\beta|}}\partial_x^\alpha\partial_y^\beta R_{\bad}(\ell,\sigma; x,y)\right| = 0.
\]
\end{prop}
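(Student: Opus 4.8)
The plan is to start from the decomposition \eqref{e:smooth_sum}, which expresses $\rho_\sigma\ast\Pi_{[\nu_\ell-\ve,\nu_\ell+\ve]}(x,y)$ as a finite sum over $k\in\Z$ (finite by Remark~\ref{r:finite}) of terms $e^{ikT(\nu_\ell-\b)}\partial_\lambda(f_k\ast\Pi_{[0,\lambda]}(I+Q_k))(x,y)$ evaluated at $\lambda=\nu_\ell$, plus $\mathcal O(\lambda^{-\infty})$. To each term I apply \propref{Q_prop} with $f=f_k$, $Q=Q_k$, and $\mu=\nu_\ell$. This produces, for each $k$, a main term of the form
\[
\frac{\nu_\ell^{n-1}\wh f_k(0)}{(2\pi)^n}\int\limits_{S_y^*M}e^{i\nu_\ell\langle\exp_y^{-1}(x),\omega\rangle_{g_y}}\Big(1+\tfrac{1}{\nu_\ell}q_{k,-1}(y,\omega)\Big)\frac{d\omega}{\sqrt{|g_y|}}
\]
times $\Theta^{-1/2}(x,y)$, plus a remainder bounded (after multiplying by $\Theta^{-1/2}$, which is smooth and bounded above and below near the diagonal) by $C_1\delta\|\partial_t\wh f_k\|_{L^\infty([-\delta,\delta])}\nu_\ell^{n-1+|\alpha|+|\beta|}+C_2\nu_\ell^{n-2+|\alpha|+|\beta|}$, with $C_1$ independent of $k$, $\delta$, and the symbol. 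The key structural point is that $\wh f_k(t)=\wh\psi_{\ve,\sigma}(t+kT)\wh\rho_\delta(t)$, so $\wh f_k$ is supported in $[-2\delta/1,2\delta]$ (inside $\supp\wh\rho_\delta$) and $\wh f_0(0)=\wh\psi_{\ve,\sigma}(0)=2\ve\cdot\widehat\rho(0)=2\ve$ wait—more precisely $\wh\psi_{\ve,\sigma}(0)=\lim_{t\to0}\wh\rho_\sigma(t)\frac{2\sin(t\ve)}{t}=2\ve$, while for $k\ne0$, $\wh\psi_{\ve,\sigma}(kT)\to 0$ rapidly as $\sigma\to0$ since $\wh\rho_\sigma(kT)=0$ once $|kT|>2/\sigma$; and in fact for $\sigma$ small enough each $\wh f_k$ with $k\ne0$ already vanishes identically once $2/\sigma<|kT|-2\delta$. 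So after taking $\sigma\to0$ only the $k=0$ term survives with a nonzero main part, and its main part is exactly $\frac{2\ve\,\nu_\ell^{n-1}}{(2\pi)^n}\int_{S_y^*M}e^{i\nu_\ell\langle\cdot\rangle}(1+\tfrac1{\nu_\ell}q_{0,-1})\Theta^{-1/2}\frac{d\omega}{\sqrt{|g_y|}}$.

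Here one must be careful: the target in \eqref{e:R_ep} has coefficient $\frac{2\pi}{T}$, not $2\ve$. The resolution is that I should \emph{not} try to match the main term of \propref{Q_prop} directly to \eqref{e:R_ep}; rather, the definition of $\psi_{\ve,\sigma}$ as $\rho_\sigma\ast\mathds1_{[-\ve,\ve]}$ must interact with the sum over $k$. Let me reconsider: the correct bookkeeping is that $\sum_k e^{ikT(\nu_\ell-\b)}\wh f_k(0)$ is a periodization, and since $\nu_\ell=\frac{2\pi}{T}(\ell+\frac{\mathfrak a}{4})$ we have $e^{ikT\nu_\ell}=e^{2\pi i k(\ell+\mathfrak a/4)}=e^{ik\b T}$ (using $\b T=\pi\mathfrak a/2$), hence $e^{ikT(\nu_\ell-\b)}=1$ for every $k$. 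Therefore $\sum_k e^{ikT(\nu_\ell-\b)}\partial_\lambda(f_k\ast\Pi_{[0,\lambda]})(x,y)\big|_{\nu_\ell}$ has main part $\frac{\nu_\ell^{n-1}}{(2\pi)^n}\big(\sum_k\wh f_k(0)\big)\int_{S_y^*M}e^{i\nu_\ell\langle\cdot\rangle}\Theta^{-1/2}\frac{d\omega}{\sqrt{|g_y|}}$, and $\sum_k\wh f_k(0)=\sum_k\wh\psi_{\ve,\sigma}(kT)\wh\rho_\delta(0)=\sum_k\wh\psi_{\ve,\sigma}(kT)$. By Poisson summation, $\sum_k\wh\psi_{\ve,\sigma}(kT)=\frac{2\pi}{T}\sum_m\psi_{\ve,\sigma}(\tfrac{2\pi m}{T})$, and since $\psi_{\ve,\sigma}=\rho_\sigma\ast\mathds1_{[-\ve,\ve]}\to\mathds1_{[-\ve,\ve]}$ as $\sigma\to0$ with $\ve<\frac{2\pi}{T}$, only the $m=0$ term contributes in the limit, giving $\frac{2\pi}{T}$. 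This is the step I expect to be the main conceptual obstacle: getting the periodization/Poisson-summation bookkeeping exactly right, including controlling the $q_{k,-1}$ correction terms (which contribute at order $\nu_\ell^{n-2}$, hence vanish after dividing by $\nu_\ell^{n-1+|\alpha|+|\beta|}$) and confirming that the $\Theta^{-1/2}$ prefactor and the $\partial_\lambda$ acting on the prefactor-free integral produce no new leading-order contribution once one recalls $\Theta(x,x)=1$ and the integral already accounts for the geometry.

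With the main term identified as $\frac{2\pi}{T}\cdot\frac{\nu_\ell^{n-1}}{(2\pi)^n}\int_{S_y^*M}e^{i\nu_\ell\langle\exp_y^{-1}(x),\omega\rangle_g}\frac{d\omega}{\sqrt{|g_y|}}$ (modulo errors), $R_{\ve,\sigma}$ is precisely what is left, so it remains to bound the errors. Summing the \propref{Q_prop} remainders over the finitely many $k$, I get a bound of the form $C_1\delta\big(\sum_k\|\partial_t\wh f_k\|_{L^\infty}\big)\nu_\ell^{n-1+|\alpha|+|\beta|}+C(\sigma,\delta)\nu_\ell^{n-2+|\alpha|+|\beta|}$ plus $\mathcal O(\nu_\ell^{-\infty})$, where crucially $C_1$ is $k$-independent. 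Dividing by $\nu_\ell^{n-1+|\alpha|+|\beta|}$ and sending $\ell\to\infty$ kills the second and third terms, leaving $C_1\delta\sum_k\|\partial_t\wh f_k\|_{L^\infty([-\delta,\delta])}$; I then send $\sigma\to0$, noting $\sum_k\|\partial_t\wh f_k\|_{L^\infty}$ stays bounded by a constant depending only on $\ve$ and $\rho$ (indeed in the $\sigma\to0$ limit it reduces to $\|\partial_t(\wh{\mathds1_{[-\ve,\ve]}}\,\wh\rho_\delta)\|_{L^\infty}$, bounded uniformly in $\delta\le1$), and finally send $\delta\to0$ so the whole expression tends to $0$. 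The one subtlety to flag is justifying the interchange of the $\sigma\to0$ limit with the sum over $k$ and with the other limits; this is where Remark~\ref{r:finite} is essential, since for each fixed $\delta$ the sum is genuinely finite and, moreover, for $\sigma$ small relative to $\delta$ only $k=0$ survives, so all interchanges are over finite sums and are harmless.
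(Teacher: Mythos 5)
Your final argument is, in its architecture, the same as the paper's: start from \eqref{e:smooth_sum}, apply \propref{Q_prop} termwise with $f=f_k$, $Q=Q_k$, use $e^{ikT(\nu_\ell-\b)}=1$, identify the leading coefficient $\sum_k\wh f_k(0)=\sum_k\wh\psi_{\ve,\sigma}(kT)\to\frac{2\pi}{T}$ by Poisson summation (this is exactly \eqref{poisson} and \eqref{e:L_goal} in the paper, which rescales $\sigma\mapsto\ve\sigma$ first but is otherwise identical), discard the $q_{k,-1}$ contributions at order $\nu_\ell^{n-2}$, and remove the $\Theta^{-1/2}$ prefactor via $\Theta^{-1/2}(x,y)=1+d_g(x,y)^2G(x,y)$. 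All of that is correct and matches the paper.

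There is, however, a recurring error that you should fix: you assert twice that ``for $\sigma$ small only $k=0$ survives.'' This is backwards. Since $\wh\psi_{\ve,\sigma}(t+kT)$ is supported where $|t+kT|\le 2/\sigma$, the number of indices $k$ with $\wh f_k\not\equiv 0$ \emph{grows} like $1/(\sigma T)$ as $\sigma\to 0^+$; this is precisely why the Poisson summation step is unavoidable (if only $k=0$ survived, the coefficient really would be $2\ve$ rather than $2\pi/T$, and the proposition would be false). Your mid-proof self-correction implicitly concedes this, but the false claim reappears at the end, where you use it to justify (i) the uniform boundedness of $\sum_k\|\partial_t\wh f_k\|_{L^\infty([-\delta,\delta])}$ and (ii) the interchange of limits. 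For (ii) the correct justification is simply Remark~\ref{r:finite} together with the fact that the limits are taken in the order $\lim_{\delta}\lim_{\sigma}\limsup_{\ell}$, so each limit is applied to a finite sum. For (i), your parenthetical justification does not parse: a sum over $k$ of sup-norms is not the sup-norm of a single function. The quantity the paper actually invokes is $\sup_{k,\sigma,\delta}\|\partial_t\wh f_k\|_{L^\infty([-\delta,\delta])}<\infty$ (on $[-\delta,\delta]$ one has $\wh f_k=\wh\psi_{\ve,\sigma}(\cdot+kT)$, whose derivative is bounded uniformly, indeed is $O(1/|k|)$ for $k\ne 0$). Since the number of nonvanishing terms grows as $\sigma\to 0$, the sum $\sum_k\|\partial_t\wh f_k\|_{L^\infty([-\delta,\delta])}$ is \emph{not} obviously bounded uniformly in $\sigma$, and this is the one place in the error bookkeeping that genuinely requires care; simply declaring it bounded, as you do, is not a proof.
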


\begin{proof}

Fix two multi-indices $\alpha,\beta \in \mathbb N^n$. First, note that for $\b$ as in \eqref{e:b} we have that for all $k\in\Z$
\[
e^{ikT(\b - \nu_\ell)} = e^{ikT(-2\pi\ell/T)} = e^{-2\pi ik\ell} = 1.
\]

Combine \eqref{e:smooth_sum} with \propref{Q_prop} to obtain
\begin{align}\label{e:triple_sum} 
\mathscr A_{\bad}(\lambda,\sigma;x,y)
&=\nu_\ell^{n-1}A_{\bad}(\ell,x,y)\sum\limits_{k\in\Z}\wh f_k(0) + \nu_\ell^{n-2}\sum\limits_{k\in\Z}\wh f_k(0)W_{k,\bad}(\ell,x,y)+\sum\limits_{k\in\Z} R_k(\ell,x,y),  
\end{align}
where
\begin{align}
A_{\bad}(\ell,x,y) 
&= \frac{1}{(2\pi)^n\Theta^{\frac{1}{2}}(x,y)}\int\limits_{S_y^*M}e^{i\nu_\ell\langle\exp_y^{-1}(x),\omega\rangle_g}c_{\bad}^0(y,\omega)\frac{d\omega}{\sqrt{|g_y|}}, \label{e:L_definition}\\
W_{k,\bad}(\ell,x,y) 
&= \frac{1}{(2\pi)^n\Theta^{\frac{1}{2}}(x,y)}\int\limits_{S_y^*M}e^{i\nu_\ell\langle\exp_y^{-1}(x),\omega\rangle_g}c_{\bad}^0(y,\omega)\sigma(Q_k)(y,\omega)\frac{d\omega}{\sqrt{|g_y|}}, \label{e:W_k_definition}
\end{align}
and $R_k$ satisfies
\begin{align*}
\sup\limits_{d_g(x,y)\le \delta}|\partial_x^\alpha\partial_y^\beta R_k(\ell,x,y)| & \le  C_1\delta {\|\partial_t\wh f_k\|}_{L^\infty([-\delta,\delta])}\nu_\ell^{n-1+|\alpha|+|\beta|} +C_2\nu_\ell^{n-2+|\alpha|+|\beta|}
\end{align*}
with $C_1$ independent of $\delta$ and $k$.  Recalling that the summation in $k$ is actually finite and that $\sup_{\{\sigma > 0,\delta< 1, k\in\Z\}}\|\partial_t\wh f_k\|_{L^\infty([-\delta,\delta])}<\infty$ (see Remark \ref{r:finite}) we have that if we define 
\[F_{\delta,\sigma}(x,y,\ell):=\frac{1}{\nu_\ell^{n-1+|\alpha|+|\beta|}}
\left(\nu_\ell^{n-2}\sum\limits_{k\in\Z}\wh f_k(0)\partial_x^\alpha\partial_y^\beta W_k(\nu_\ell,x,y) + \sum\limits_{k\in\Z} \partial_x^\alpha\partial_y^\beta R_k(\nu_\ell,x,y)\right),\]
then we have for each fixed $\sigma > 0$
\begin{equation}\label{e:remainders}
\lim_{\delta\to 0^+}\limsup\limits_{\ell\to \infty}\sup\limits_{d_g(x,y)\le \delta}\left|F_{\delta,\sigma}(x,y,\ell)\right| = 0.
\end{equation}

Define 
$$
A(\ell,x,y) 
:= \frac{1}{(2\pi)^n\Theta^{\frac{1}{2}}(x,y)}\int\limits_{S_y^*M}e^{i\nu_\ell\langle\exp_y^{-1}(x),\omega\rangle_g}\frac{d\omega}{\sqrt{|g_y|}}.
$$
To deal with the first term in \eqref{e:triple_sum}, we claim that 
\begin{equation}\label{e:L_goal2}
\lim\limits_{\sigma\to 0^+}\lim_{\bad\to 0}\lim_{\delta\to 0}\sup_{d_g(x,y)<\delta}\limsup_{\ell\to \infty}\nu_\ell^{-|\alpha|-|\beta|}\Big|\partial_x^\alpha\partial_y^\beta A_{\bad}(\ell,x,y)
- \partial_x^\alpha\partial_y^\beta A(\ell,x,y)\Big|=0.
\end{equation}
 and
\begin{equation}\label{e:L_goal}
\lim\limits_{\sigma\to 0^+}\lim_{\bad\to 0}\lim_{\delta\to 0}\sup_{d_g(x,y)<\delta}\limsup_{\ell\to \infty}\nu_\ell^{-|\alpha|-|\beta|}\Big|\partial_x^\alpha\partial_y^\beta A_{\bad}(\ell,x,y)\sum\limits_{k\in\Z}\wh f_k(0) 
- \frac{2\pi N}{T}\partial_x^\alpha\partial_y^\beta A_{\bad}(\ell,x,y)\Big|=0.
\end{equation}

Observe that~\eqref{e:L_goal2} follows from the fact that 
$$
\limsup_{\ell\to \infty}\nu_\ell^{-|\alpha|-|\beta|}|\partial_x^\alpha\partial_y^\beta (e^{i\nu_\ell\langle\exp_y^{-1}(x),\omega\rangle_g}(1-c_{\bad}^0(y,\omega)))|\leq C_{\alpha\beta}|(1-c_{\bad}^0(y,\omega))|
$$
and that $\|1-c_{\bad}^0(y,\omega)\|_{L^1(\mathbb{S}^{n-1}_\omega)}\to 0.$

To prove \eqref{e:L_goal}, first note that by \eqref{e:f_k} and \eqref{e:rho_ep} we have
\[
\sum\limits_{k\in\Z}\wh f_k(0) = \sum\limits_{k\in\Z} \lim\limits_{t\to 0}{{e^{i\pi t\frac{N-1}{T}}}}\frac{\sin\lp\frac{\pi N t}{T}\rp}{\sin\lp\frac{\pi t}{T}\rp}\wh\psi_{\sigma}(t+kT)\wh\rho_\delta(t) = N\sum\limits_{k\in\Z}\wh\psi_{\sigma}(kT).
\]
Using the Poisson summation formula and the definition of $\psi_{\w,\sigma}:=\psi_{\sigma}$ in \eqref{e:psi_defn},
\begin{align}
\begin{split}
N\sum\limits_{k\in\Z}\wh\psi_{\sigma}(kT)   &= N\sum\limits_{k\in\Z}\frac{\sin(\w kT)}{kT}\wh\rho(\sigma kT) 
 = \frac{2\pi N}{T}\sum\limits_{k\in\Z}\mathds 1_{[-1,1]}\ast\rho_{\sigma/\w}(2\pi k/T\w)\\
&= \frac{2\pi N}{T}\sum\limits_{k\in\Z}\psi_{1,\sigma/\w}\lp\frac{2\pi k}{T\w}\rp.
\end{split}
\end{align}
\color{black}
Motivated by the form of the above expression, we replace $\sigma$ by $\w\sigma$, which is permitted since $\w$ is fixed throughout this argument. Thus, 
\begin{equation}\label{poisson}
N\sum\limits_{k\in\Z}\wh\psi_{\w,\w\sigma}(kT) = \frac{2\pi N}{T}\sum\limits_{k\in\Z}\psi_{1,\sigma}\lp\frac{2\pi k}{T\w}\rp.
\end{equation}
 Since $\psi_{1,\sigma} =\mathds 1_{[-1,1]}\ast\rho_{\sigma}$ and $0<\w<\frac{2\pi}{T}$, we have that for $k\ne 0,$
\[
\left|\frac{1}{T}\psi_{1,\sigma}\lp\frac{k}{T\w}\rp\right| \le \frac{C_{N'}}{T}\lp1 + \frac{|k|}{T\w\sigma}\rp^{-N'}\quad \text{for any }N'.
\]
Thus, if we choose $N' \ge 2$, we obtain
\[
\left|\sum\limits_{\substack{k\in\Z \\ k\ne 0}}\frac{1}{T}\psi_{1,\sigma}\lp\frac{k}{T\w}\rp\right| \le C_{N'} \sum\limits_{\substack{k\in\Z \\ k\ne 0}}(\w\sigma)^{N'}T^{-1}(\w\sigma+|k|/T)^{-N'} \le C_{N'}(\w\sigma)^{N'}T^{-1}\sum\limits_{\substack{k\in\Z \\ k\ne 0}} (|k|/T)^{-N},
\]
which converges to $0$ as $\sigma\to 0.$ Also, when $k = 0$, we have
\[
\psi_{1,\sigma}(0) = \frac{1}{2\pi}\int\limits_{-\infty}^\infty \frac{2\sin t}{t}\wh\rho(\sigma t)\,dt \to \psi(0) = 1
\]
as $\sigma\to 0,$ and this finishes the proof of the claim in \eqref{e:L_goal}.

Combining \eqref{e:R_ep}, \eqref{e:triple_sum}, \eqref{e:remainders}, and \eqref{e:L_goal} yields that the final step in the proof is to eliminate the factor of $\Theta^{-\frac{1}{2}}(x,y)$ implicit in the definition of $L$. For this, we observe that $\Theta^{-\frac{1}{2}}(x,x) = 1$ and its differential vanishes on the diagonal in $M\times M$. Hence, for small $d_g(x,y)$, we have 
\[
\Theta^{-\frac{1}{2}}(x,y) = 1 + d_g(x,y)^2 G(x,y)
\]
for some smooth, bounded function $G$. Thus, it suffices to show that 
\begin{equation}\label{e:theta_suffices}
\lim\limits_{\delta\to 0^+}\limsup\limits_{\ell\to\infty}\sup\limits_{d_g(x,y)\le\delta}\left|\frac{1}{\nu_\ell^{|\alpha|+|\beta|}}\partial_x^\alpha\partial_y^\beta\lp d_g(x,y)^2\int\limits_{S_y^*M}e^{i\nu_\ell\langle\exp_y^{-1}(x),\omega\rangle}\,\frac{d\omega}{\sqrt{|g_y|}}\rp\right| =0.
\end{equation}
In the case where at most one derivative falls on the factor of $d_g(x,y)^2$, the above statement holds trivially. If two or more derivatives fall on this factor, then at most $|\alpha|+|\beta| - 2$ factors of $\nu_\ell$ can appear from differentiating the integral over $S_y^*M$, and so \eqref{e:theta_suffices} also holds in this case. 

\end{proof}
%
%
%

\section{The contributions of subperiodic loops}

\label{s:subperiodic}

\subsection{Subperiodic Loops near $K_N$}
In this section, we analyse the asymptotic behavior of the contributions to the spectral projector from times which are bounded away from integer multiples of $T$, which are characterized by the quantity
\begin{equation}
\label{e:Bdef}
\mathscr B_{\bad}(\lambda,\sigma;x,y) = \frac{1}{2\pi}\int\limits_{-\infty}^\infty e^{it\lambda}\frac{\sin\lp\frac{\pi N t}{T}\rp}{\sin\lp\frac{\pi t}{T}\rp}\wh\psi_{\sigma}(t)U_t\Ces(x,y)\lp 1 - \sum\limits_{k\in\Z}\wh\rho(t-kT)\rp\,dt.\end{equation}
We can rewrite this as 
\[\mathscr B_{\bad}(\lambda,\sigma;x,y) = \sum\limits_{k\in\Z}\frac{1}{2\pi}\int\limits_{kT}^{(k+1)T} e^{it\lambda}\frac{\sin\lp\frac{\pi N t}{T}\rp}{\sin\lp\frac{\pi t}{T}\rp}\wh\psi_{\sigma}(t)U_t\Ces(x,y)\lp 1 - \wh\rho(t-kT) - \wh\rho(t-(k+1)T)\rp\,dt.\]
Changing variables via $t\mapsto t+kT$, we obtain 
\[\mathscr B_{\bad}(\lambda,\sigma;x,y) = \sum\limits_{k\in\Z}\frac{e^{ikT\lambda}}{2\pi}\int\limits_{0}^{T} e^{it\lambda}\frac{\sin\lp\frac{\pi N t}{T}\rp}{\sin\lp\frac{\pi t}{T}\rp}\wh\psi_{\sigma}(t+kT)U_{t+kT}\Ces(x,y)\lp 1 - \wh\rho(t) - \wh\rho(t-T)\rp\,dt.\]
Similarly to \secref{s:smooth}, we use the fact that we can write 
\begin{equation}\label{eq:Q_k_definition}
U_{t+kT} = e^{-ik\mathfrak b T}(U_t+Q_k(t))
\end{equation}
for some $Q_k(t)$ which is an FIO of order $-\frac{1}{4}-1.$ Thus,
\[\mathscr B_{\bad}(\lambda,\sigma;x,y) = \sum\limits_{k\in\Z}\frac{e^{ikT(\lambda-\mathfrak b)}}{2\pi}\int\limits_{0}^{T} e^{it\lambda}\frac{\sin\lp\frac{\pi N t}{T}\rp}{\sin\lp\frac{\pi t}{T}\rp}\wh\psi_{\sigma}(t+kT)(U_t+ Q_k(t))\Ces(x,y)\lp 1 - \wh\rho(t) - \wh\rho(t-T)\rp\,dt.\]
Note that due to the support properties of $\wh\rho$, we can extend the integral over $[0,T]$ to be performed over the whole real line. Let us define 
\begin{equation}\label{e:gkN}\wh g_{k,N}(t) = \frac{\sin\lp\frac{N\pi t}{T}\rp}{\sin\lp\frac{\pi t}{T}\rp}\wh\psi_{\sigma}(t+kT)\lp 1 - \wh\rho(t) - \wh\rho(t-T)\rp 1_{[0,T]}(t)
\end{equation}
so that 
\begin{equation}
\label{e:buzz}
\mathscr B_{\bad}(\lambda,\sigma;x,y) = \sum\limits_{k\in\Z}\frac{e^{ikT(\lambda-\mathfrak b)}}{2\pi}\mathcal F^{-1}_{t\mapsto\lambda}\left[ \wh g_{k,N}\lp U_t + Q_k(t)\rp \Ces\right].
\end{equation}
\begin{lem}
Suppose that in some coordinate chart, 
\[U_t(x,y) = \frac{1}{(2\pi)^n}\int\limits_{\R^n}e^{i\varphi(x,y,t,\xi)}a(x,y,\xi)\,d\xi\]
for some nondegenerate homogeneous phase function $\varphi$ and some symbol $a\in S^0$. Then,
\[U_t\Ces(x,y) = \frac{1}{(2\pi)^n}\int\limits_{\R^n}e^{i\varphi(x,y,t,\xi)}a_{\bad}(x,y,\xi)\,d\xi,  \]
where
$$
a_{\bad}(x,y,\xi)-a^0(x,y,\xi)\overline{c_{\bad}^0(y,-\partial_y \varphi)}\in S^{-1}.
$$
\end{lem}

\noindent This lemma follows from the standard FIO calculus (c.f. \cite[Ch.\,25]{HormanderBook2009}). With this in hand, we have the following proposition.

\begin{prop}
\label{p:rationalTimes}
Let $(M,g)$ be a smooth, compact, Zoll manifold with minimal common period $T$. Suppose that $x_0\in M$ satisfies
$$
\lim_{\delta\to 0}\mu_{S^*M}(\mathcal{L}_{N,\delta}(x_0))=0,
$$
and let $c_{\bad}$ satisfy~\eqref{e:suppc} and $\mathscr{B}_{\bad}$ be as in~\eqref{e:Bdef}. Then, for all $\alpha, \beta \in \mathbb N$,  $\sigma>0$,
$$
\lim_{\bad\to 0}\lim_{\delta\to 0^+}\limsup_{\lambda \to \infty} \sup_{x,y\in B(x_0,\delta)}\lambda^{1-n-|\alpha|-|\beta|}\Big|\partial_x^\alpha\partial_y^\beta\mathscr B_{\bad}(\lambda,\sigma;x,y) \Big|=0.
$$
\end{prop}

\begin{proof}

We claim that for $\w > 0$ $\sigma>0$, and any $k\in \Z$ fixed, we have

\begin{equation}\label{e:UC_bound}
 F^{-1}_{t\mapsto\lambda}\left[ \wh g_{k,N} U_t \Ces(x,y)\right]
= D_{\bad,k}(\lambda,x,y)\lambda^{n-1} + R_{\bad,k}(\lambda,x,y),
\end{equation}
where $D_{\bad,k}$ and $R_{\bad,k}$ are functions satisfying
\begin{gather*}\lim\limits_{\bad \to 0}\lim_{\delta \to 0}\limsup\limits_{\lambda > 0}\sup\limits_{x,y\in B(x_0,\delta)}\sum_k\lambda^{-|\alpha|-|\beta|}\left|\partial_{x}^\alpha\partial_y^\beta D_{\bad,k}(\lambda,x,y)\right| = 0\, \\  \lim_{\lambda\to \infty}\sup\limits_{x,y\in B(x_0,\delta)}\lambda^{1-n-|\alpha|-|\beta|}\left|\partial_x^\alpha\partial_y^\beta R_{\bad,k}(\lambda,x,y)\right| =0.\end{gather*} Moreover, we claim that if $Q_k$ is as in \eqref{eq:Q_k_definition}, then 
\begin{equation}\label{e:QC_bound}
\lim\limits_{\lambda\to\infty}\sup_{x,y\in B(x_0,\delta)}\left| {\lambda^{1-n-|\alpha|-|\beta|}}\partial_x^\alpha\partial_y^\beta F^{-1}_{t\mapsto\lambda}\left[ \wh g_{k,N} Q_k \Ces(x,y)\right]\right| = 0.
\end{equation}

We start proving the Proposition given the claim. Notice that, since $\sigma>0$, the sum in~\eqref{e:buzz} is finite, we have 
$$
\limsup_{\lambda \to \infty} \sup_{x,y\in B(x_0,\delta)} \lambda^{1-n-|\alpha|-|\beta|}|\partial_x^\alpha\partial_y^\beta \mathscr B_{\bad}(\lambda,\sigma;x,y) |\leq \sum_k\limsup_{\lambda\to \infty}\sup_{x,y\in B(x_0,\delta)}\lambda^{-|\alpha|-|\beta|}|\partial_x^\alpha\partial_y^\beta D_{\bad,k}(\lambda, x,y)|
$$
The proposition then follows since
\begin{multline*}
\lim_{\bad\to 0}\lim_{\delta\to 0^+}\sum_k\limsup_{\lambda\to \infty}\sup_{x,y\in B(x_0,\delta)}\lambda^{-|\alpha|-|\beta|}|\partial_x^\alpha\partial_y^\beta D_{\bad,k}(\lambda, x,y)|\\=\sum_k\lim_{\bad\to 0}\lim_{\delta\to 0^+}\limsup_{\lambda\to \infty}\sup_{x,y\in B(x_0,\delta)}\lambda^{-|\alpha|-|\beta|}|\partial_x^\alpha\partial_y^\beta D_{\bad,k}(\lambda, x,y)|=0.
\end{multline*}

Fix $\bad>0$. Note that since $C_{\bad}$ is a pseudodifferential operator, the canonical relation of $U_t\Ces$ is identical to that of $U_t$, which we denote by 
\[\mathscr C = \{(x,\xi,y,\eta,t,\tau):\,|\tau| = |\xi|_{g_y}, \Phi^t(y,\eta) = (x,\xi)\},\]
and hence $U_t\Ces(x,y)$ can be represented as a locally finite sum of expressions of the form 
\[\frac{1}{(2\pi)^n}\int_{\R^n} e^{i\varphi(x,y,t,\xi)}a_{\bad}(x,y,t,\xi)\,d\xi,\]
where $a_{\bad}^0(x,y,t,\xi) = a^0(x,y,\xi)\overline{c_{\bad}^0(y,-d_y\varphi)}$ with $a^0$ and $c_{\bad}^0$ being the principal symbols of $U_t$ and $C_{\bad}$, respectively. Here, $\varphi$ is some nondegenerate phase function parameterizing $\mathscr C$. Thus, we have that $F^{-1}_{t\mapsto\lambda}\left[ \wh g_{k,N} U_t \Ces(x,y)\right]=\frac{1}{2\pi}\int\limits_{-\infty}^\infty e^{it\lambda} \wh g_{k,N}(t)U_t\Ces(x,y)\,dt$ can be expressed as a locally finite sum of terms of the form 
\begin{align*}
& \frac{1}{(2\pi)^n}\int\limits_{-\infty}^\infty\int\limits_{\R^n} e^{it\lambda + i\varphi(x,y,t,\xi)}\wh g_{k,N}(t)a_{\bad}(x,y,t,\xi)\,d\xi\,dt\\
&\hskip 0.5in= \frac{\lambda^n}{(2\pi)^n}\int\limits_{-\infty}^\infty\int\limits_{\R^n} e^{i\lambda(t + \varphi(x,y,t,\xi))}\wh g_{k,N}(t)a_{\bad}^0(x,y,t,\xi)\,d\xi\,dt + \mathcal O(\lambda^{n-2}),
\end{align*}
since the subprincipal symbol of $C_{\bad}$ is zero in a neighborhood of $x_0$. To see this, observe that the principal symbol is independent of $x$ in a neighborhood of $x_0$ and is homogeneous degree 0 for $|\xi|$ large enough. Let us convert to polar coordinates via $\xi = r\omega$ for $r > 0$ and $\omega \in \R^n$, which gives 

\begin{align*}
& \frac{\lambda^n}{(2\pi)^n}\int\limits_{-\infty}^\infty\int\limits_{\R^n} e^{i\lambda(t + \varphi(x,y,t,\xi))}\wh g_{k,N}(t)a_{\bad}^0(x,y,t,\xi)\,d\xi\,dt \\
&\hskip 0.5in = \frac{\lambda^n}{(2\pi)^n}\int\limits_{-\infty}^\infty\int\limits_{0}^\infty\int\limits_{S^{n-1}} e^{i\lambda(t + r\varphi(x,y,t,\omega))}\wh g_{k,N}(t)a_{\bad}^0(x,y,t,\omega)r^{n-1}\,dr\,dt\,d\omega.
\end{align*}
Let $\chi_{\delta'}\in C^\infty(\mathbb{S}^{n-1})$ be such that $|\nabla_\omega\varphi(x,y,t,\omega)| < \delta'$ for all $\omega\in \supp\chi_{\delta'}.$ Then,

\begin{align}
\begin{split}\label{B_chi_polar}
& \frac{\lambda^n}{(2\pi)^n}\int\limits_{-\infty}^\infty\int\limits_{0}^\infty\int\limits_{S^{n-1}}e^{i\lambda(t + r\varphi(x,y,t,\omega))}\wh g_{k,N}(t)a_{\bad}^0(x,y,t,\omega)r^{n-1}\,dr\,dt\,d\omega \\
& \hskip 0.5in = \frac{\lambda^n}{(2\pi)^n}\int\limits_{-\infty}^\infty\int\limits_{0}^\infty\int\limits_{S^{n-1}}e^{i\lambda(t + r\varphi(x,y,t,\omega))}\chi_{\delta'}(\omega)\wh g_{k,N}(t)a_{\bad}^0(x,y,t,\omega)r^{n-1}\,dr\,dt\,d\omega + \mathcal O(\lambda^{-\infty}),
\end{split}
\end{align}
since $|\nabla_\omega\varphi|$ is bounded below on the support of $1-\chi_{\delta'}$, and so we may integrate by parts arbitrarily many times in $\omega$ using the operator $\frac{\nabla_\omega\varphi\cdot \nabla_\omega}{i\lambda r |\nabla_\omega \varphi|^2}$. Now, for each fixed $\omega$, we aim to perform stationary phase in $(t,r)$. The critical points of the phase function $\wt \varphi = t+r\varphi(x,y,t,\omega)$ occur when 
\[\partial_t\varphi(x,y,t,\omega) = -\frac{1}{r}\quad\text{and}\quad \varphi(x,y,t,\omega) = 0.\]
The Hessian at such points is given by 
\[\lp\begin{array}{cc}
r\partial_t^2\varphi & \partial_t\varphi\\
\partial_t\varphi & 0
\end{array}\rp,\]
and hence the critical points are nondegenerate. Since $\varphi$ is homogeneous of degree 1 in the fiber variable, we have that $\partial_r\varphi(x,y,t,r\omega) = \varphi(x,y,t,\omega) = 0$ at any of the critical values of $t$. Also, since $|\nabla_\omega \varphi(x,y,t,r\omega)| < \delta'$ on $\supp\chi_{\delta'}$, we have that $|d_\xi\varphi| < \delta'$ at the critical points, and hence the points
\[(x,d_x\varphi,y,-d_y\varphi,t,\partial_t\varphi)\]
are very close to the canonical relation $\mathscr C.$ Thus, for $\delta'$ sufficiently small, we have that at each critical point $(t_c,r_c)$, 
\[d_g(\Phi^{t_c}(y,-d_y\varphi) , (x,d_x\varphi)) < \min(\bad,\delta).\]
Thus, if $(x,y)\in B(x_0,\bad/2)$, this implies that $\Phi^{t_c}(y,-d_y\varphi)\in B(x_0,\bad)$. Due to the support properties of $c_{\bad}(y,-d_y\varphi)$ (see~\eqref{e:suppc}), we have that $-d_y\varphi\notin L_{\bad}(x_0)$, (see~\eqref{e:L_def} for the definition of $L_{\bad}$). In particular, the only critical points which contribute a nonzero term to the sum are those for which $|t_c - \frac{p}{q}T| < \bad$ for some $0 < p < q\le N$. Therefore, by stationary phase, the leading term in \eqref{B_chi_polar} can be expressed as a finite sum of terms of the form 
\begin{equation}
\label{e:stationaryPhase}
\begin{aligned}
\frac{\lambda^{n-1}}{(2\pi)^n}\int\limits_{S^{n-1}}\sum\frac{1}{|\partial_t\varphi|}e^{i\lambda t_c + i\frac{\pi}{4}|\sgn \Hess\wt \varphi|}\chi_{\delta'}(\omega)\wh g_{k,N}(t_c)a_{\bad}^0(x,y,t_c,\omega)r_c^{n-1}\,d\omega +\mathcal O_{\delta',\bad}(\lambda^{n-2}),
\end{aligned}
\end{equation}
where the sum is taken over all critical points $(t_c(\omega),r_c(\omega))$ for which $a_{\bad}^0(x,y,t_c,\omega)\ne 0$. Since, for $d_g(x,y)<\bad/2$ small enough, $|t_c-\frac{p}{q}T| < \bad$ for some $0 < p < q\le N$, we have that $|\sin(N\pi t_c/T)|\leq N\pi\bad/T$, and hence   
\begin{multline*}
\lim\limits_{\bad\to 0}\lim_{\delta\to 0}\sup_{d_g(x,y)<\delta} \wh g_{k,N}(t_c) a_{\bad}^0(x,y,t_c,\omega)
\\
= \lim\limits_{\bad\to 0} \lim_{\delta\to 0}\sup_{d_g(x,y)<\delta}\frac{\sin\lp\frac{N\pi t_c}{T}\rp}{\sin\lp\frac{\pi t_c}{T}\rp}\wh\psi_{\sigma}(t_c+kT)\lp 1 - \wh\rho(t_c) - \wh\rho(t_c-T)\rp a_{\bad}^0(x,y,t_c,\omega)= 0,\end{multline*}
which completes the proof of \eqref{e:UC_bound}. The estimate \eqref{e:QC_bound} follows by a similar argument if we note that $Q_k(t)$ is an FIO of one order lower than $U_t$ with the same canonical relation.  

This completes the proof for $|\alpha|=|\beta|=0$. To handle derivatives, observe that if a derivative falls on the amplitude, then the kernel is smaller than the main term by a power of $\lambda$ and hence does not contribute after taking the $\lambda\to \infty$. Therefore, the only term we need to consider is when all of the derivatives fall on the exponential. In this case, we obtain~\eqref{e:stationaryPhase} with $\lambda^{n-1}$ replaced $\lambda^{n-1+|\alpha|+|\beta|}$ and the symbol $a_\w^0$ replaced by another symbol $\tilde{a}_{\w}^0$ with same support properties. Hence, the proof is completed in the same way as for $|\alpha|=|\beta|=0$. 
\end{proof}

\subsection{Looping times outside of $K_N$}
By the assumptions of \thmref{t:main_thm_2}, we know that there must be at most a small measure set of subperiodic loops with lengths which are outside of $K_N = \{\frac{p}{q}T:\, 1\le p < q \le N\}$, and thus we expect their contributions to be negligible. In this section, we demonstrate this rigorously by utilizing analysis similar to \cite{SoggeZelditch2002,CH15}.

\begin{prop}
\label{p:b_eps_prop}
Let $B_{\bad}\in\Psi^0(M)$ be any pseudodifferential operator such that the principal symbol $b_{\bad}^0(x,\xi)$ satisfies
$$
\sup_{x\in M}\|b_{\bad}^0(x,\cdot)\|_{L^1(\mathbb{S}^{n-1})}\leq C\bad.
$$
Then, for any $a > 0$, $\alpha, \beta\in \mathbb N$,  there exist constants $\mu_0, C > 0$ such that 
\[\sup\limits_{x,y\in M}\left|\partial_x^\alpha\partial_y^\beta\Pi_{[\mu,\mu+a]}B_{\bad}^*(x,y)\right|\le C\bad\mu^{n-1+|\alpha|+|\beta|}\]
for all $\mu\ge \mu_0$.
\end{prop}

\begin{rmk}\textnormal{
Note that this proposition holds on \emph{any} Riemannian manifold, not only Zoll manifolds.
}
\end{rmk}

First, we claim that for any $\rho\in\mathscr S(\R)$ with Fourier transform $\wh\rho$ supported in $[-\frac{1}{2}\text{inj}(M,g),\frac{1}{2}\text{inj}(M,g)]$ with $\wh\rho \equiv 1$ in a neighborhood of 0, we have 
\[\Theta^{\frac{1}{2}}(x,y)\partial_\mu\lp\Pi_{[0,\mu]}B_{\bad}^*\rp(x,y) = \frac{\mu^{n-1}}{(2\pi)^n}\int\limits_{S_y^*M}e^{i\mu\langle \exp_y^{-1}(x),\omega\rangle}\overline{b_{\bad}^0(y,\omega)}\frac{d\omega}{\sqrt{|g_y|}} + R_{\bad}(\mu,x,y),\]
where 
\[\left|\partial_x^\alpha\partial_y^\beta R_{\bad}(\mu,x,y)\right| \le C_0 \mu^{n-2+|\alpha|+|\beta|}\]
for some $C_0 > 0.$ This follows by a repetition of the proof of \propref{Q_prop} with $(I+Q)C$ replaced by $B_{\bad}^*$ and $f$ replaced by $\rho.$ Since $\wh\rho \equiv 1$ near 0, the first term in the remainder estimate \eqref{Q_prop_R_est} vanishes. Then, since $b_{\bad}^0$ is supported in a set of Liouville measure zero, we have that 
\begin{equation}
\label{e:B_eps_smooth}
\left|\partial_\mu\partial_x^\alpha\partial_y^\beta\lp \rho\ast \Pi_{[0,\mu]}B_{\bad}^*(x,y)\rp\right| \le c_1{\bad}\mu^{n-1+|\alpha|+|\beta|} + c_{\bad}\mu^{n-2+|\alpha|+|\beta|}
\end{equation}
for sufficiently large $\mu,$ where $c_1 > 0$ is independent of ${\bad}$.  

To control the difference $\partial_\mu \Pi_{[0,\mu]}B_{\bad}^* - \partial_\mu(\rho\ast \Pi_{[0,\mu]}B_{\bad}^*)$, we invoke general Tauberian theorems in the exact same fashion as in \cite{CH15}.  

\begin{lem}[\cite{?}, Tauberian theorem for non-monotone functions]
\label{lem:tauberian}
Let $\theta$ be a piecewise continuous function such that there exists $A > 0$ with $\wh \theta(t) = 0$ for $|t|\le A$. Suppose further that for all $\mu\in\R$ there exist constants $m\in\N$ and $c_1,c_2 > 0$ such that 
\begin{equation}\label{e:taub_hypothesis}
|\theta(\mu+s) - \theta(\mu)| \le c_1(1+ |\mu|)^{m} + c_2(1+|\mu|)^{m-1}\quad \text{for all }s\in [0,1].
\end{equation}
Then, there exists a positive constant $c_{m,A}$, depending only on $m$ and $A$, such that for all $\mu$ we have 
\[|\theta(\mu)|\le c_{m,A}\lp c_1(1+|\mu|)^m + c_2(1+|\mu|)^{m-1}\rp.\]
\end{lem}

To apply this lemma, we first set 
\[\theta_{\bad}(x,y,\mu) = \partial_x^\alpha\partial_y^\beta(\Pi_{[0,\mu]}B_{\bad}^*(x,y) - \rho\ast \Pi_{[0,\mu]}B_{\bad}^*(x,y)).\]
We must then demonstrate that $\theta_{\bad}$ satisfies the hypotheses listed above. Note that 
\[\mathcal{F}_{\mu\to t}\lp \theta_{\bad}(x,y,\cdot)(t) \rp= (1-\hat\rho(t))\mathcal{F}_{\mu\to t}(\partial_x^\alpha\partial_y^\beta(\Pi_{[0,\cdot ]}B_{\bad}^*(x,y) )(t).\]
Since $\wh\rho \equiv 1$ near 0, we therefore have that 
\[\mathcal F_{\mu\mapsto t}(\theta_{\bad}(x,y,\cdot)) = 0\]
for $t$ in some interval around 0.
Then, $\mathcal F_{\mu\mapsto t}(\tilde{\theta}_{\bad}(x,y,\cdot))$ vanishes in a neighborhood of $t=0$. We now verify the hypothesis~\eqref{e:taub_hypothesis} for $\theta_{\bad}$.


So let $s\in[0,1]$ and $\mu\in\R$, and notice that 
\begin{align*}
\theta_{\bad}(x,y,\mu+s) - \theta_{\bad}(x,y,\mu) & = \partial_x^\alpha\partial_y^\beta\lp\Pi_{[0,\mu]}B_{\bad}^*(x,y,\mu+s) - \Pi_{[0,\mu]}B_{\bad}^*(x,y,\mu)\rp \\
& \hskip 0.5in + \partial_x^\alpha\partial_y^\beta\lp\wh\rho\ast \Pi_{[0,\mu]}B_{\bad}^*(x,y,\mu+s) - \wh\rho\ast\Pi_{[0,\mu]}B_{\bad}^*(x,y,\mu)\rp.
\end{align*}
To control the first difference, we apply Cauchy-Schwartz, which gives 
\begin{align*}
&\left|\partial_x^\alpha\partial_y^\beta(\Pi_{[0,\mu]}B_{\bad}^*(x,y,\mu+s) - \Pi_{[0,\mu]}B_{\bad}^*(x,y,\mu))\right| \\
& =  \left|\sum\limits_{\mu < \lambda_j\le\mu+s}\partial_x^\alpha\varphi_j(x)\overline{\partial_y^\beta B_{\bad}\varphi_j(y)}\right|\\
& \le \lp\sum\limits_{\mu < \lambda_j\le\mu+s}|\partial_x^\alpha\varphi_j(x)|^2\rp^{1/2} \lp\sum\limits_{\mu < \lambda_j\le\mu+s}|\partial_y^\beta B_{\bad}\varphi_j(y)|^2\rp^{1/2}.
\end{align*}
Applying the local Weyl law (c.f. \cite[Thm 5.2.3]{SoggeBook2014}), we have 
\[\left|\partial_x^\alpha\partial_y^\beta(\Pi_{[0,\mu]}B_{\bad}^*(x,y,\mu+s) - \Pi_{[0,\mu]}B_{\bad}^*(x,y,\mu))\right| \le c_1 {\bad} \mu^{n-1+|\alpha|+|\beta|} + c_{\bad} \mu^{n-2+|\alpha|+|\beta|}\]
for $\mu \ge 1$ and  $s\in[0,1]$. To estimate the derivatives of $\wh\rho\ast \Pi_{[0,\mu]}B_{\bad}^*(x,y,\mu+s) - \wh\rho\ast\Pi_{[0,\mu]}B_{\bad}^*(x,y,\mu)$, we simply integrate \eqref{e:B_eps_smooth} from $\mu$ to $\mu+s$ to obtain 
\[\left|\partial_x^\alpha\partial_y^\beta(\wh\rho\ast \Pi_{[0,\mu]}B_{\bad}^*(x,y,\mu+s) - \wh\rho\ast\Pi_{[0,\mu]}B_{\bad}^*(x,y,\mu))\right| \le c_1'{\bad}\lambda^{n-1+|\alpha|+|\beta|} + c_{\bad}' \lambda^{n-2+|\alpha|+|\beta|}\]
for all $s\in[0,1]$, all $\mu$ sufficiently large, and some new constants $c_1', c_{\bad}'$ where $c_1' > 0$ is independent of ${\bad}$. Therefore, we have that 
\[\left|\theta_{\bad}(x,y,\mu+s)-\theta_{\bad}(x,y,\mu)\right|\le c_1 {\bad} \mu^{n-1+|\alpha|+|\beta|} + c_{\bad} \mu^{n-2+|\alpha|+|\beta|}\]
after potentially increasing $c_1$ and $c_{\bad}$. Applying \lemref{lem:tauberian} with $m = n-1+|\alpha|+|\beta|$, $A = \frac{1}{2}\text{inj}(M,g)$, and $\theta=\theta_{\bad}$, we obtain, using $n\geq 2$, that
$$
|\theta_{\bad}(\mu,x,y)|\leq c_1\bad\mu^{n-1+|\alpha|+|\beta|}+c_{\bad}\mu^{n-2+|\alpha|+|\beta|},
$$
which completes the proof of \propref{p:b_eps_prop}.

\color{black}
\section{On-diagonal analysis of the spectral projector}\label{s:onDiag}

The goal of this section is to establish a lower bound for the spectral function restricted to the diagonal, which is critical for the purposes of comparing the smoothed projector to the original. In particular, we show that most of the ``mass" of the spectral function is concentrated near 
$$
\bigcup_{\ell\in \N}\big[\nu_\ell - r\ell^{-\frac{1}{2}} , \nu_\ell +r\ell^{-\frac{1}{2}} \big],
$$
with $\nu_\ell$ as defined in \eqref{e:Zoll_eigs}. This is similar to the original eigenvalue clustering result of \cite[Theorem 3.1]{DG75}. We expect that a stronger cluster estimate with $r\ell^{-\frac{1}{2}}$ replaced with $r\ell^{-1}$ should hold, but we do not prove this here as the refined statement is not needed. We also note that the results of this section do not depend on any assumptions about superiodic loops. We need only that all geodesics are periodic with minimal common period $T.$

\begin{prop}\label{clustering_prop}
Let $(M,g)$ be a Zoll manifold with minimal common period $T>0$ and let $\{\varphi_j\}_j$ be the corresponding Laplace eigenfunctions defined in \eqref{e:efxs}.
Let $r>0$ and fix a multi-index $\alpha \in \mathbb N^n$. 
\, Then, there exist $K,C,\lambda_0> 0$ so that for all $x\in M$  and $\lambda \geq \lambda_0$
\begin{equation*}\label{clustering_eqn}
\sum\limits_{\lambda_j\in \mathcal A(K,r,\lambda)}|\partial_x^\alpha \varphi_j(x)|^2 \ge \lp 1 - Cr^{-2}\rp\!\! \sum\limits_{|\lambda_j-\lambda|\le K}|\partial_x^\alpha\varphi_j(x)|^2,
\end{equation*}
where
\begin{equation*}\label{clustering_set}
\mathcal A(K,r,\lambda) = \Big\{\lambda_j:\; |\lambda_j - \lambda| \le K,\, \;\;\lambda_j \in \bigcup_{\ell\in \N}\big[\nu_\ell - r\ell^{-\frac{1}{2}} , \nu_\ell +r\ell^{-\frac{1}{2}}\big]\Big\}.
\end{equation*}
\end{prop}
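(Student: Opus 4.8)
The plan is to study the Fourier coefficients (in $t$) of the "cluster-localized" wave trace $\sum_j |\partial_x^\alpha\varphi_j(x)|^2 \chi((\lambda_j-\lambda)/K)$ and to use the Zoll structure — in particular the approximate $T$-periodicity of the half-wave operator established in \cite{DG75} via \eqref{Q_k_definition} — to show that these coefficients decay along the sequence $kT$, which by a TauberianPoisson-type argument forces the eigenvalue mass to concentrate near the clusters $[\nu_\ell - r\ell^{-1/2},\nu_\ell + r\ell^{-1/2}]$. Concretely, I would fix a nonnegative $\chi\in\mathscr S(\R)$ with $\widehat\chi$ compactly supported, $\widehat\chi(0)=1$, and $\chi\ge c>0$ on $[-1,1]$, and consider, for a scale parameter $\mu>0$ to be taken comparable to $\ell^{1/2}$,
\[
N^\alpha_{x}(\lambda,\mu) := \sum_{j}|\partial_x^\alpha\varphi_j(x)|^2\,\Big(\mathds 1_{[-1,1]}\ast\chi_{\mu}\Big)\!\big((\lambda-\lambda_j)\big)\cdot \widehat\chi(\lambda-\lambda_j)\ \text{(schematically)},
\]
but in practice the cleanest route is to work directly with $\sum_j |\partial_x^\alpha\varphi_j(x)|^2 \,\eta(\lambda-\lambda_j)$ for a bump $\eta$ supported in $[-K,K]$ and its refinement at scale $\mu^{-1}$, mirroring the Duistermaat–Guillemin argument but tracking the point $x$ uniformly.

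The key steps, in order, would be: (i) Express $\sum_j |\partial_x^\alpha\varphi_j(x)|^2\, \widehat g(\lambda-\lambda_j)$ as $\tfrac{1}{2\pi}\int e^{it\lambda}\widehat g(t)\,\partial_x^\alpha\partial_{x'}^\alpha U_t(x,x')|_{x'=x}\,dt$ for test functions $g$, and localize $t$ near the singular times $kT$ using that $(M,g)$ is Zoll, exactly as in \eqref{e:smooth_projector_sum}. (ii) On the $k$-th piece, use $U_{kT}=e^{-ik\b T}(I+Q_k)$ with $Q_k\in\Psi^{-1}$ from \eqref{Q_k_definition}, together with the parametrix \eqref{e:parametrix}–\eqref{e:UQ} applied on the diagonal with $|\alpha|$ derivatives, to get that the $k$-th contribution to the derivative of the cluster-counting function is $e^{ikT(\lambda-\b)}$ times a symbol of size $O(\lambda^{n-1+2|\alpha|})$ with the next term down by $\lambda^{-1}$ — crucially with constants uniform in $x$ and in $k$ on the (finite, for each fixed cutoff scale) range of relevant $k$. (iii) Sum the geometric-type series in $k$: because $e^{ikT(\b-\nu_\ell)}=1$, the leading terms reinforce precisely when $\lambda$ is within $O(\lambda^{-1})$... — more carefully, within $O(\mu^{-1})$ after truncating the $k$-sum at $|k|\lesssim \mu$ — of some $\nu_\ell$, and one extracts from this a Tauberian statement: the mass of $\sum_{|\lambda_j-\lambda|\le K}|\partial_x^\alpha\varphi_j(x)|^2$ lying at distance $\ge r\ell^{-1/2}$ from all $\nu_\ell$ is $O(r^{-2})$ times the total, by choosing $\mu\sim \ell^{1/2}$ and using Chebyshev/second-moment (the $r^{-2}$, rather than $r^{-1}$, comes from a second-moment estimate on the distance to the cluster center, which is exactly the origin of the remark after the proposition that $r\ell^{-1/2}$ could be improved to $r\ell^{-1}$ with more work). (iv) Finally, absorb the lower-order symbol terms and the $t$-localization errors, which are $O(\lambda^{n-2+2|\alpha|})$ and $O(\lambda^{-\infty})$ respectively, into the claimed bound, and note that $\sum_{|\lambda_j-\lambda|\le K}|\partial_x^\alpha\varphi_j(x)|^2$ is itself bounded above and below by $c_\pm \lambda^{n-1+2|\alpha|}$ uniformly in $x$ (local Weyl law with derivatives on a compact manifold), so the ratio formulation makes sense and the error terms are genuinely lower order.

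The main obstacle I expect is step (iii): converting the oscillatory-sum/Fourier-side information into the clean two-sided statement with the quantitatively correct $(1-Cr^{-2})$ factor, uniformly in $x\in M$ and in $\lambda\ge\lambda_0$. The subtlety is that one must choose the test-function scale $\mu$ and the window $K$ in the right order relative to $\lambda$ and $r$, and control the tail of the $k$-sum (and the decay of $\widehat f_k$-type factors) without losing uniformity in $x$; the $x$-uniformity is what the parametrix in \cite{CH15} buys us, since all the symbol bounds there are uniform on $d_g(x,y)\le \tfrac12\mathrm{inj}(M,g)$, but one has to be careful that the stationary-phase constants in the diagonal evaluation do not degenerate. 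A secondary technical point is handling the $\partial_x^\alpha$ derivatives: as in the proof of \propref{Q_prop}, each derivative raises the symbol order by one, so the relevant counting function scales like $\lambda^{n-1+2|\alpha|}$ and the clustering statement must be proved at that homogeneity throughout; this is routine once the $|\alpha|=0$ case is in place, by the same device $\partial_x^\alpha\partial_{x'}^\alpha e^{i\langle\exp_{x'}^{-1}(x),\xi\rangle}=O(|\xi|^{2|\alpha|})$.
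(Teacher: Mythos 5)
Your proposal assembles the right ingredients: the order-lowering property of $U_t-e^{i\b T}U_{t+T}$ from \cite{DG75} and \eqref{Q_k_definition}, the diagonal parametrix with the symbol order raised by $2|\alpha|$ to handle derivatives, the local Weyl law lower bound $\sum_{|\lambda_j-\lambda|\le K}|\partial_x^\alpha\varphi_j(x)|^2\gtrsim \lambda^{n-1+2|\alpha|}$ (which is exactly the paper's \lemref{big_cluster_lemma}), and the correct intuition that the $r^{-2}$ must come from a second-moment/Chebyshev estimate on the distance to the cluster centers. However, the step you yourself flag as ``the main obstacle,'' step (iii), is the entire content of the proposition and is left unresolved. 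You propose to sum the contributions of all shifts $kT$ with $|k|\lesssim\mu\sim\ell^{1/2}$ and then run a Tauberian argument, but you never exhibit the \emph{nonnegative} spectral weight that would legitimize a Chebyshev step, nor do you control the truncated $k$-sum uniformly in $x$; as written this is a plan rather than a proof, and the plan is considerably harder than what is needed.

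The missing idea is that no sum over $k$ is required: a single time-translation by $T$ suffices. Writing
\[
\sum_{j} \big(1-e^{i(\b-\lambda_j)T}\big)\chi(\lambda-\lambda_j)|\varphi_j(x)|^2
=\frac{1}{2\pi}\int e^{it\lambda}\wh\chi(t)\big(U_t(x,x)-e^{i\b T}U_{t+T}(x,x)\big)\,dt
=\mathcal O(\lambda^{n-2})
\]
uniformly in $x$ (since the operator in parentheses is an FIO of one order lower), and then \emph{taking real parts}, produces the nonnegative weight $1-\cos(T(\b-\lambda_j))$. This weight is precisely the second moment you are after: the elementary inequality $1-\cos\theta\ge \tfrac12\theta^2-\tfrac1{24}\theta^4$ on $[-\pi,\pi]$ gives $1-\cos(T(\b-\lambda_j))\gtrsim r^2\ell^{-1}$ whenever $T|\lambda_j-\nu_\ell|\in[r\ell^{-1/2},\pi]$. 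Choosing $\chi\ge c>0$ on $[-\pi/T,\pi/T]$ then yields the off-cluster bound $\sum_{\lambda_j\in\mathcal E(\ell,r)}|\varphi_j(x)|^2\le Cr^{-2}\ell^{n-1}$ directly, and dividing by the Weyl lower bound of your step (iv), summed over the $\mathcal O(K)$ clusters meeting $[\lambda-K,\lambda+K]$, gives the $(1-Cr^{-2})$ factor. Your treatment of the derivatives $\partial_x^\alpha$ is correct and matches the paper's. Note finally that it is exactly because only one shift is used, and only the second moment extracted, that the proposition reaches the $\ell^{-1/2}$ scale rather than the sharper $\ell^{-1}$ scale your multi-shift scheme is implicitly aiming for.
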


\begin{proof}
We begin by considering the case where $\alpha = 0$ separately. For this, we proceed in close analogy to the proof of \cite[Theorem 3.1]{DG75}. Let $\chi\in \mathscr S(\R)$ with $\chi \ge 0$ and $\wh\chi\in C_c^\infty(\R)$ with $\wh\chi(0) > 0$. Repeating previous calculations, we have that for $x\in M$
\begin{equation}\label{e:eq1}
\sum\limits_{j=0}^\infty \chi(\lambda - \lambda_j)|\varphi_j(x)|^2 = \frac{1}{2\pi}\int\limits_{-\infty}^\infty e^{it\lambda}\wh\chi(t)U_t(x,x)\,dt.
\end{equation}
Similarly, 
\begin{equation}\label{e:eq2}
\sum\limits_{j=0}^\infty e^{i(\b - \lambda_j)T}\chi(\lambda - \lambda_j)|\varphi_j(x)|^2 
= \frac{1}{2\pi}\int\limits_{-\infty}^\infty e^{it\lambda}\wh\chi(t)e^{i\b T}U_{t+T}(x,x)\,dt.
\end{equation}
Recalling that $U_t - e^{i\b T}U_{t+T}$ is an FIO defined by $\mathcal{C}$ of order $-\frac{1}{4}-1$ (see \eqref{e:canonical_relation}), we know that we can write 
\[
U_t(x,x) - e^{i\b T}U_{t+T}(x,x) = \frac{1}{(2\pi)^n}\int\limits_{T_y^*M}e^{i\phi(t,x,x,\xi)}B(t,x,x,\xi)\,d\xi,
\]
where $B$ is a symbol of order $-1$ and $\phi$ is any admissible phase function which parametrizes $\mathcal{C}$ (c.f. \cite[p. 45]{DG75}). As in the proof of \propref{Q_prop}, we can use the phase function $\phi(t,x,y,\xi) = \langle\exp_{y}^{-1}(x),\xi\rangle_{g_y} - t|\xi|_{g_y}$. Hence,
\begin{align}\label{e:shift_FIO}
&\frac{1}{2\pi}\int\limits_{-\infty}^\infty e^{it\lambda}\wh\chi(t)\Big(U_t(x,x) - e^{i\b T}U_{t+T}(x,x)\Big)\,dt 
= \frac{1}{(2\pi)^{n+1}}\int\limits_{-\infty}^\infty\int\limits_{T_y^*M} e^{it(\lambda-|\xi|)}\wh\chi(t)B(t,x,x,\xi)\,d\xi\,dt \notag\\
&\hspace{5cm}  =\frac{\lambda^n}{(2\pi)^{n+1}}\int\limits_{-\infty}^\infty\int\limits_0^\infty\int\limits_{S_y^*M} \wh\chi(t) e^{i\lambda t(1-s)} s^{n-1} B(t,x,x,\lambda s\omega)\,ds\,d\omega\,dt\notag\\
&\hspace{5cm} =\mathcal O(\lambda^{n-2}).
\end{align}
Here, to obtain the bound in the last line we used the fact that $B$ is a symbol of order $-1$ and repeated the calculations from the proof of \propref{Q_prop} that follow \eqref{e:before_stat}. From \eqref{e:eq1}and \eqref{e:eq2} it follows   that 
\begin{equation}\label{e:diag_shift_eqn}
\sum\limits_{j=0}^\infty \chi(\lambda - \lambda_j)\lp 1  - e^{i(\b-\lambda_j) T}\rp|\varphi_j(x)|^2= \mathcal O(\lambda^{n-2}).
\end{equation}
Thus, 
we can take real parts to obtain that 
\begin{equation}\label{e:real_parts}
\sum\limits_{j=0}^\infty \lp 1 - \cos\lp T( \b - \lambda_j)\rp\rp\chi(\lambda - \lambda_j)|\varphi_j(x)|^2 = \mathcal O(\lambda^{n-2})
\end{equation}
as $\lambda\to\infty.$ 
For any $r,\ell > 0$  define the set 
\[
\mathcal E(\ell,r) = \{\lambda_j\in\Spec(\sqrt{-\Delta_g}):\; r\ell^{-1/2}\le T|\lambda_j-\nu_\ell|\le \pi\}
\]
 Recall that $\nu_\ell = \frac{2\pi \ell}{T} + \b$ by \eqref{e:b}. Thus, if $\lambda_j\in\mathcal E(\ell,r)$, we have that 
\[
1 - \cos\lp T(\b - \lambda_j)\rp= 1 - \cos\Big( T(\nu_\ell - \lambda_j)-2\pi \ell\Big)\ge \tfrac{1}{2}r^2 \ell^{-1} -\tfrac{1}{24}r^4\ell^{-2},
\] 
since  $1 - \cos( \theta- 2\pi  \ell ) \ge \frac{1}{2}\theta^2 -\frac{1}{24}\theta^4$ for $\theta \in [-\pi, \pi]$ and all $\ell \in \N$. 
Therefore, using that $\nu_\ell\geq c\ell$ for $\ell$ large enough, together with \eqref{e:real_parts}, we obtain that for every $r>0$ there exist $C,\ell_0>0$ such that for all $\ell\ge \ell_0$, we have
\begin{align*}
\sum\limits_{\lambda_j\in\mathcal E(\ell,r)}\tfrac{1}{2}r^2\ell^{-1} \min\big(\chi(\mu):\,|\mu|\le \tfrac{\pi}{T}\big)|\varphi_j(x)|^2 & \le C \sum\limits_{\lambda_j\in\mathcal E(\ell,r)}\lp 1 - \cos\lp(\b - \lambda_j)\rp\rp\chi(\nu_\ell-\lambda_j)|\varphi_j(x)|^2 \\
& \le C\ell^{n-2}.
\end{align*}
If we adjust $\chi$ so that $\chi(\mu) > 0$ for all $|\mu|\le \frac{\pi}{T}$, we obtain that 
\begin{equation}\label{e:B_k_eqn}
\sum\limits_{\lambda_j\in\mathcal E(\ell,r)}|\varphi_j(x)|^2 \le Cr^{-2}\ell^{n-1}
\end{equation}
for all $r> 0$ and all $\ell$ large enough. 

Next, observe that for any $K,r > 0$,
\[
\mathcal A(K,r,\lambda)=\{\lambda_j:\,|\lambda_j-\lambda|\le K\}\cap \bigcap\limits_{\ell = 1}^\infty \mathcal E(\ell,r)^c.
\]
Therefore, 
\begin{equation}\label{e:A_sum}
\sum\limits_{\lambda_j\in \mathcal A(K,r,\lambda)}|\varphi_j(x)|^2 = \sum\limits_{|\lambda_j-\lambda|\le K}|\varphi_j(x)|^2 - \sum\limits_{\ell=1}^\infty\sum\limits_{\lambda_j\in\{|\lambda_j-\lambda|\le K\}\cap\mathcal E(\ell,r)}|\varphi_j(x)|^2.
\end{equation}
Note that 
\[
\{\lambda_j:|\lambda_j-\lambda|\le K\}\cap\mathcal E(\ell,r)=\emptyset \qquad \text{if}\;\;\; |\nu_\ell-\lambda|>K+\pi.
\]
Thus, if we define
\[
\mathcal V(\lambda,K) = \{\ell:\,|\nu_\ell-\lambda|\le K+\pi\},
\]
 by \eqref{e:A_sum}
\begin{equation}\label{e:A_sum2}
\sum\limits_{\lambda_j\in \mathcal A(K,r,\lambda)}|\varphi_j(x)|^2 = \sum\limits_{|\lambda_j-\lambda|\le K}|\varphi_j(x)|^2 -
 \sum\limits_{\ell\in \mathcal V(\lambda,K)}\sum\limits_{\lambda_j\in\{|\lambda_j-\lambda|\le K\}\cap\mathcal E(\ell,r)}|\varphi_j(x)|^2.
\end{equation}
In addition, for each $\ell\in \mathcal V(\lambda,K)$, we have that $\nu_\ell\approx \lambda$, and so by \eqref{e:B_k_eqn} that 
\begin{equation}\label{e:EkB}
\sum\limits_{\lambda_j\in\{|\lambda_j-\lambda|\le K\}\cap\mathcal E(\ell,r)}|\varphi_j(x)|^2 \le Cr^{-2}\lambda^{n-1}
\end{equation}
since $\ell\approx\nu_\ell\approx \lambda$. Next, we need the following lemma whose proof we postpone until the end of this section. 

\begin{lem}\label{big_cluster_lemma}
Let $(M,g)$ be any compact smooth manifold of dimension $n$ with Laplace eigenfunctions $\{\varphi_j\}_j$ as in \eqref{e:efxs}. Then, for every multi-index $\alpha \in \N$ there exist $K,C,\lambda_0>0$ so that 
\[
\sum\limits_{|\lambda-\lambda_j|\le K}|\partial_x^\alpha\varphi_j(x)|^2 \ge C\lambda^{n-1+2|\alpha|}
\]
for all $\lambda\ge \lambda_0.$
\end{lem}

Returning to the proof of \propref{clustering_prop}, we can combine \lemref{big_cluster_lemma} with \eqref{e:EkB} to obtain that for $K$ sufficiently large,
\begin{equation}\label{e:almost}
\sum\limits_{\lambda_j\in\{|\lambda_j-\lambda|\le K\}\cap\mathcal E(\ell,r)}|\varphi_j(x)|^2 \le Cr^{-2}\sum\limits_{|\lambda_j-\lambda|\le K}|\varphi_j(x)|^2.
\end{equation}
Furthermore, since the cardinality of $\mathcal V(\lambda,K)$ is proportional to $K$, we can combine \eqref{e:almost} with \eqref{e:A_sum2} to obtain
\[
\sum\limits_{\lambda_j\in \mathcal A(K,r,\lambda)}|\varphi_j(x)|^2 \ge \lp 1 - \frac{C}{r^2}\rp\sum\limits_{|\lambda_j-\lambda|\le K}|\varphi_j(x)|^2,
\]
which completes the proof in case where $|\alpha| = 0$. 

In order to prove the statement for higher order derivatives $\partial_x^\alpha$, one need only show the appropriate analogue of \eqref{e:diag_shift_eqn}. In particular, this will follow from
\begin{equation}\label{diag_shift_deriv}
\frac{1}{2\pi}\int\limits_{-\infty}^\infty e^{it\lambda}\wh\chi(t)\partial_x^\alpha\partial_y^\alpha\lp \Big. U_t(x,y) - e^{i\b T}U_{t+T}(x,y)\rp\big|_{y=x}\,dt = \mathcal O(\lambda^{n-2+2|\alpha|}).
\end{equation}
This follows directly from the off-diagonal analogue of \eqref{e:shift_FIO}, which is given by 
\begin{align*}
&\frac{1}{2\pi}\int\limits_{-\infty}^\infty e^{it\lambda}\wh\chi(t)\lp \Big. U_t(x,y) - e^{i\b T}U_{t+T}(x,y)\rp\,dt\\
& \hskip 0.5in =\frac{\lambda^n}{(2\pi)^{n+1}}\int\limits_{-\infty}^\infty \int\limits_{T_y^*M} e^{i\lambda\lp\langle\exp_y^{-1}(x),\xi\rangle + t(1-|\xi|)\rp}\wh\chi(t)\wh B(t,x,y,\lambda\xi)\,d\xi\,dt.
\end{align*}
Thus, each derivative in $x$ or $y$ yields at most one additional power of $\lambda$, and so by previous arguments we obtain \eqref{diag_shift_deriv}. The rest of the argument proceeds identically to the $|\alpha| = 0$ case. \\
\end{proof}

\begin{proof}[Proof of Lemma \ref{big_cluster_lemma}]
The proof of this lower bound relies on the generalized local Weyl law, which states that if $A$ is a classical polyhomogeneous pseudodifferential operator of order zero, then 
\begin{equation}\label{e:Weyl_pseudo}
A\Pi_{[0,\lambda]} A^*(x,x) = \sum\limits_{\lambda_j\le \lambda}|A\varphi_j(x)|^2 = L_A(x,\lambda)\lambda^n + R_A(\lambda,x),
\end{equation}
where
\[
L_A(x):= C\int\limits_{S_x^*M} |\sigma_0(A)(x,\xi)|^2\,d\xi
\]
for some $C>0$, and $\sup_{x\in M}|R_A(\lambda,x)|\le C_A\lambda^{n-1}$ for some $C_A> 0$ and all $\lambda\ge 1$ (c.f.  \cite[Theorem 5.2.3]{SoggeBook2014}). We note that since $A$ is of order zero, $\left|L_A(x)\right|\le C_A'$ for some $C_A' > 0$. Given these facts, we define for each multi-index $\alpha$ the operator
\[A = \partial_x^\alpha(1+\Delta_g)^{-|\alpha|/2} \in \Psi_{c\ell}^0(M)\]
whose principal symbol is a homogeneous function in $C^\infty(T^*M\setminus 0)$ which can be written in local coordinates as
\[ \sigma_0(A)(x,\xi) = \frac{i^{|\alpha|}\xi^\alpha}{|\xi|_g^{|\alpha|}}.\]
By the local Weyl law, we have 
\begin{align*}
A\Pi_{[\lambda-K,\lambda+K]}A^*(x,x) 
&= \lp A\Pi_{\lambda+K}A^*(x,x) - L_A(x)(\lambda+K)^n \Big.\rp \\
&\;\;\;\; -\lp  A\Pi_{\lambda-K}A^*(x,x) - L_A(x)(\lambda-K)^n \Big.\rp
\\
& \;\;\;\;\;\; +L_A(x)\lp (\lambda +K)^n - (\lambda-K)^n\rp\\
& = R_A(\lambda+K,x) - R_A(\lambda-K,x) + L_A(x)\lp K\lambda^{n-1} + \mathcal O_{K,A}(\lambda^{n-2})\rp.
\end{align*}
Since $|R_A(\lambda,x)| \le C_A\lambda^{n-1}$ and $L_A(x) \ge \delta > 0$ for all $x\in M$ and all $\lambda \ge 1,$ we have that 
\[
A\Pi_{[\lambda-K,\lambda+K]}A^*(x,x)  \ge \lp \delta K - C_A\rp\lambda^{n-1} + \mathcal O_{K,A}(\lambda^{n-2}).
\]
Thus, if we choose $K$ large enough so that $\delta K - C_A > 0, $ there exists a $\lambda_0 > 0$ so that 
\begin{equation}\label{e:pseudo_lowerbound}
A\Pi_{[\lambda-K,\lambda+K]}A^*(x,x) \ge C\lambda^{n-1}
\end{equation}
for some $C> 0$ and all $\lambda \ge \lambda_0$. On the other hand, we can use the functional calculus for $\Delta_g$ to write 
\[
A\Pi_{[\lambda-K,\lambda+K]}A^*(x,x) = \sum\limits_{|\lambda-\lambda_j|\le K}(1+\lambda_j^2)^{-|\alpha|}|\partial_x^\alpha\varphi_j(x)|^2.
\]
Observe that 
\[
\left|\frac{1+\lambda^2}{1+\lambda_j^2} - 1 \right| =  \frac{\left|\lambda^2 - \lambda_j^2\right|}{1 + \lambda_j^2}\le \frac{K(2\lambda+K)}{1 + (\lambda-K)^2},
\]
Since $1 + (\lambda - K)^2 \ge \frac{1}{2}\lambda^{2}$ if $\lambda\ge \frac{1}{4}K$, we obtain
\[\left|\frac{1+\lambda^2}{1+\lambda_j^2} - 1 \right| \le  C K\lambda^{-1} + \mathcal O_{K}(\lambda^{-2})\]
as $\lambda\to\infty$. Using binomial expansion, we also obtain 
\[\left|\frac{(1+\lambda^2)^{|\alpha|}}{(1+\lambda_j^2)^{|\alpha|}} - 1\right| \le C_\alpha K\lambda^{-1} + \mathcal O_{K,\alpha}(\lambda^{-2})\]
for any $\alpha.$ Therefore,
\begin{multline}\label{e:cluster_error}
\left|(1+\lambda^2)^{|\alpha|}A\Pi_{[\lambda-K,\lambda+K]}A^*(x,x) - \sum\limits_{|\lambda_j-\lambda|\le K}|\partial_x^\alpha\varphi_j(x)|^2  \right|\\ \le \Big(C_\alpha K \lambda^{-1} + \mathcal O_{K,\alpha}(\lambda^{-2})\Big)\sum\limits_{|\lambda_j-\lambda|\le K}|\partial_x^\alpha\varphi_j(x)|^2.
\end{multline}
Hence, by~\eqref{e:pseudo_lowerbound},
\begin{equation*}
C\lambda^{n-1+2|\alpha|}\leq (1+\lambda^2)^{|\alpha|}A\Pi_{[\lambda-K,\lambda+K]}A^*(x,x)\leq \Big(1+C_\alpha K \lambda^{-1} + \mathcal O_{K,\alpha}(\lambda^{-2})\Big)\sum\limits_{|\lambda_j-\lambda|\le K}|\partial_x^\alpha\varphi_j(x)|^2
\end{equation*}
Since $C_\alpha K\lambda^{-1}+\mathcal{O}_{K,\alpha}(\lambda^{-2})$ tends to zero as $\lambda \to\infty$ for any fixed $K > 0$, this proves the claim.

\end{proof}

\section{Proof of the main results}\label{s:proofs}
In this section we complete the proof of \thmref{t:main_thm_2} and prove \corref{c:MRW_cor}.

\subsection{Proof of \thmref{t:main_thm_2}}

Let us recall that our goal is to compute the asymptotic behavior of 
\begin{equation}\label{e:sum_proj}
\frac{1}{N}\sum\limits_{j=0}^{N-1}\Pi_{[\nu_{\ell+j}-\w,\nu_{\ell+j}+\w]}(x,y).
\end{equation}
Recalling our pseudodifferential cutoffs $B_{\bad}$ and $C_{\bad}$ as well as the definitions of $\mathscr A_{\bad}$ and $\mathscr B_{\bad}$ in \eqref{e:A_defn} and \eqref{e:B_defn}, respectively, we have shown previously that for any $\sigma > 0$, the smoothed projector satisfies
\begin{align*}
& \rho_\sigma\ast \frac{1}{N}\sum\limits_{j=0}^{N-1}\Pi_{[\nu_{\ell+j}-\w,\nu_{\ell+j}+\w]}(x,y) \\
& \hspace{0.7in}= \frac{1}{N}\rho_\sigma\ast \sum\limits_{j=0}^{N-1}\Pi_{[\nu_{\ell+j}-\w,\nu_{\ell+j}+\w]}(C_{\bad}^*+B_{\bad}^*)(x,y)\\
& \hspace{0.7in}= \frac{1}{N}\mathscr A_{\bad}(\nu_\ell,\sigma,x,y) + \frac{1}{N}\mathscr B_{\bad}(\lambda,\sigma,x,y) + \frac{1}{N}\sum\limits_{j=0}^{N-1}\Pi_{[\nu_{\ell+j}-\w,\nu_{\ell+j}+\w]}B_{\bad}^*(x,y).
\end{align*}
By \propref{p:smooth_prop}, we have that for any multi-indices $\alpha,\beta$, 
\begin{equation}
\label{e:A_final}
\lim\limits_{\sigma\to 0^+}\lim_{\bad\to 0^+}\lim\limits_{\delta\to 0^+}\limsup\limits_{\ell\to \infty}\sup\limits_{d_g(x,y)\le \delta}\left|\frac{1}{\nu_\ell^{n-1+|\alpha|+|\beta|}}\partial_x^\alpha\partial_y^\beta R_{\bad}(\ell,\sigma; x,y)\right| = 0,
\end{equation}
where we recall that $R_{\bad}$ is given by
\[R_{\bad}(\ell,\sigma; x,y):= \mathscr A_{\bad}(\nu_\ell,\sigma;x,y) - \frac{2\pi N}{T}\cdot\frac{\nu_\ell^{n-1}}{(2\pi)^{n}}\int\limits_{S_y^*M}e^{i\nu_\ell\langle\exp_y^{-1}(x),\omega\rangle_g}\frac{d\omega}{\sqrt{|g_y|}}.\]
Additionally, we know by \propref{p:rationalTimes} that 
\begin{equation}
\label{e:B_final}
\lim_{\bad\to 0}\lim_{\delta\to 0^+}\limsup_{\ell\to \infty} \sup_{d_g(x,y)<\delta}\nu_\ell^{1-n-|\alpha|-|\beta|}\Big|\partial_x^\alpha\partial_y^\beta\mathscr B_{\bad}(\nu_\ell,\sigma;x,y) \Big|=0.
\end{equation}
Finally, we have that 
\begin{equation}\label{e:B_eps_final}
\lim\limits_{\bad\to 0}\sup\limits_{x,y\in M}\left|\nu_\ell^{1-n-|\alpha|-|\beta|}\partial_x^\alpha\partial_y^\beta \sum\limits_{j=0}^{N-1}\Pi_{[\nu_{\ell+j}-\w,\nu_{\ell+j}+\w]}B_{\bad}^*(x,y)\right| = 0
\end{equation}
by \propref{p:b_eps_prop}. Therefore, if we combine \eqref{e:A_final}, \eqref{e:B_final}, and \eqref{e:B_eps_final}, we have that the proof of \thmref{t:main_thm_2} reduces to the following lemma.
\begin{lem}
\label{l:finalStep}Suppose that $(M,g)$ is smooth, compact, Zoll manifold with minimal period $T$. Then, for any $w < \frac{\pi}{2T}$ and each pair of multi-indices $\alpha,\beta$, we have
\begin{equation}\label{e:end_goal}
\lim\limits_{\sigma \to 0^+}\limsup\limits_{\ell\to\infty}\nu_\ell^{1-n- |\alpha|-|\beta|} \sup\limits_{x,y\in M}\left|\Big. \partial_x^\alpha\partial_y^\beta\lp\Pi_{[\nu_\ell-\w,\nu_\ell+\w]}(x,y) - \rho_{\sigma}\ast\Pi_{[\nu_\ell-\w,\nu_\ell+\w]}(x,y)\rp\right|=0.
\end{equation}
\end{lem}
Note Lemma~\ref{l:finalStep} this is sufficient to complete the proof because the summation over $j$ in \eqref{e:sum_proj} is finite. Thus, we proceed to prove \eqref{e:end_goal}.

Noting that 
\[
\mathcal F_{\tau\mapsto t}\lp\mathds 1_{[-\w,\w]}(\tau)\rp = \int\limits_{-\w}^\w e^{-it\tau}\,d\tau = \frac{2\sin(t\w)}{t},
\]
we can rewrite
\begin{align}\label{e:difference}
\Pi_{[\lambda-\w,\lambda+\w]}(x,y) - \rho_{\sigma}\ast \Pi_{[\lambda-\w,\lambda+\w]}(x,y)=
\sum\limits_{j=0}^\infty h_{\w,\sigma}(\lambda-\lambda_j)\varphi_j(x)\overline{\varphi_j(y)},
\end{align}
for any $\lambda > 0,$ where
\begin{equation}\label{e:h}
h_{\w,\sigma}(\tau) =  \mathds 1_{[-\w,\w]}(\tau) - \frac{1}{\pi}\int\limits_{-\infty}^\infty e^{it\tau}\wh\rho_{\sigma}(t)\frac{\sin(t\w)}{t}\,dt.
\end{equation}
We claim that $h_{\w,\sigma}$ satisfies a bound of the form 
\begin{equation}\label{e:h_bound}
\left|h_{\w,\sigma}(\tau)\right| \le C_N\Big(1 + \frac{||\tau|-\w|}{\sigma}\Big)^{-N}
 \quad \text{ for any }N \in\N.
\end{equation}
To see this, recall that $\rho$ is a Schwartz-class function with $\int_\R\rho\,dt = \wh\rho(0) = 1$ and $\rho_{\sigma}(\tau) = \frac{1}{\sigma}\rho(\tau/\sigma)$. Thus,
\[
\frac{1}{\pi}\int\limits_{-\infty}^\infty e^{it\tau}\wh\rho_{\sigma}(t)\frac{\sin(t\w)}{t}\,dt 
= \int\limits_{-\w}^\w \frac{1}{\sigma}\rho\lp\frac{\tau-\mu}{\sigma}\rp\,d\mu
= \int\limits_{\frac{\tau-\w}{\sigma}}^{\frac{\tau+\w}{\sigma}}\rho(\mu)\,d\mu.
\]
Suppose $\tau > \w$. Then, 
\[
\left|\int\limits_{\frac{\tau-\w}{\sigma}}^{\frac{\tau+\w}{\sigma}}\rho(\mu)\,d\mu\right| \le \int\limits_{\frac{\tau-\w}{\sigma}}^\infty |\rho(\mu)|\,d\mu \le C_N\Big(1 + \frac{\tau-\w}{\sigma}\Big)^{-N}
\]
for any $N$ since $\rho$ is Schwartz. The analogous estimate clearly holds in the case where $\tau < -\w$. If instead $|\tau| < \w$, then since $\rho$ integrates to 1 and is rapidly decaying, along with the fact that $\mathds 1_{[-\w,\w]}$ is identically one on $[-\w,\w]$, we have that
\[
\left|h_{\w,\sigma}(\tau)\right|
=\left|\mathds 1_{[-\w,\w]}(\tau) - \int\limits_{\frac{\tau-\w}{\sigma}}^{\frac{\tau+\w}{\sigma}}\rho(\mu)\,d\mu\right| \le \int\limits_{-\infty}^{\frac{\tau-\w}{\sigma}}|\rho(\mu)|\,d\mu + \int\limits_{\frac{\tau + \w}{\sigma}}^\infty| \rho(\mu)|\,d\mu\le C_N\Big(1 + \frac{||\tau|-\w|}{\sigma}\Big)^{-N}
\]
for any $N$. Finally, in the case where $|\tau| = \w$, \eqref{e:h_bound} only claims that $h_{\w,\sigma}(\tau)$ is uniformly bounded in $\w,\sigma$, which follows immediately from the fact that 
\[
\left|h_{\w,\sigma}(\w)\right| = \left|1 - \int_{0}^{\frac{2\w}{\sigma}}\rho(\mu)\,d\mu\right| \le 1
\]
along with the analogous statement for $\tau = -\w.$ Therefore, we have proved \eqref{e:h_bound}.

Observe that by \eqref{e:difference} and \eqref{e:h} we have
\begin{align*}
&\left|\partial_x^\alpha\partial_y^\beta\lp\Pi_{[\lambda-\w,\lambda+\w]}(x,y) - \rho_{\sigma}\ast\Pi_{[\lambda-\w,\lambda+\w]}(x,y)\rp\right|\\
& \hspace{1.5in}\le \lp\sum\limits_{j=0}^\infty |h_{\w,\sigma}(\lambda-\lambda_j)||\partial_x^\alpha\varphi_j(x)|^2\rp^{\frac{1}{2}}\lp\sum\limits_{j=0}^\infty |h_{\w,\sigma}(\lambda-\lambda_j)||\partial_y^\beta\varphi_j(y)|^2\rp^{\frac{1}{2}}.
\end{align*}
Thus, the claim in \eqref{e:end_goal} would follow once we prove that given $\alpha \in \mathbb N$, setting $\lambda = \nu_\ell$ gives
\begin{equation}\label{e:h_suffices}
\lim\limits_{\sigma\to 0^+}\limsup\limits_{\ell\to\infty}\frac{1}{\nu_\ell^{n-1 + 2|\alpha|}}\sum\limits_{j=0}^\infty |h_{\w,\sigma}(\nu_\ell-\lambda_j)||\partial_x^\alpha\varphi_j(x)|^2 = 0 .
\end{equation}
 For each $\ell$, decompose $\N=J_1(\ell) \cup J_2(\ell)\cup J_3(\ell)$ with
\begin{align*}
&J_1(\ell):=\{j: |\lambda_j - \nu_\ell| > \tfrac{\pi}{T}\}, \qquad 
J_2(\ell):=\{j: |\lambda_j-\nu_\ell| < r\ell^{-1/2}\}, \\ 
&\qquad \qquad J_3(\ell):=\{j: r\ell^{-1/2}< |\lambda_j - \nu_\ell| \le \tfrac{\pi}{T}\}.
\end{align*}
Note that
\begin{align}\label{e:sum_m}
\sum\limits_{j\in J_1(\ell)} |h_{\w,\sigma}(\nu_\ell-\lambda_j)||\partial_x^\alpha\varphi_j(x)|^2
& = \sum\limits_{m=1}^\infty\sum\limits_{|\lambda_j-\nu_\ell| \in [\frac{m\pi}{T},\frac{(m+1)\pi}{T}]}|h_{\w,\sigma}(\nu_\ell-\lambda_j)||\partial_x^\alpha\varphi_j(x)|^2.
\end{align}
Whenever $|\lambda_j-\nu_\ell| \in [\frac{m\pi}{T},\frac{(m+1)\pi}{T}]$ with $m \ge1$ and $\w < \frac{\pi}{2T}$, we have that 
\begin{equation*}
|h_{\w,\sigma}(\nu_\ell-\lambda_j)| \le C_N \lp 1 + \frac{1}{\sigma}\left|\frac{m\pi}{T} - \w\right|\rp^{-N} \le C_N'\lp  \frac{m}{\sigma
 }\rp^{-N} 
\end{equation*}
for some $C_N' > 0$ by \eqref{e:h_bound}. For the same range of $\lambda_j$, we also have that 
\begin{equation*}
\sum\limits_{|\lambda_j-\nu_\ell|\in[m\pi/T,(m+1)\pi/T]}|\partial_x^\alpha\varphi_j(x)|^2 \le C(1 + \nu_\ell + m\pi/T)^{n-1 + 2|\alpha|}
\end{equation*}
for some $C,C'> 0$ by the local Weyl law \eqref{e:Weyl_pseudo}. Therefore, by \eqref{e:sum_m}
\[
\sum\limits_{j\in J_1(\ell)} |h_{\w,\sigma}(\nu_\ell-\lambda_j)||\partial_x^\alpha\varphi_j(x)|^2\le \wt C_N \sigma^N\sum\limits_{m=1}^\infty (1 + \nu_\ell + m\pi/T)^{n-1+2|\alpha|}m^{-N}
\]
for some $\wt C_N > 0.$ Taking any $N \ge  n +1 + 2\alpha$, we thus obtain
\begin{equation}\label{e:I_eqn}
\sum\limits_{j\in J_1(\ell)} |h_{\w,\sigma}(\nu_\ell-\lambda_j)||\partial_x^\alpha\varphi_j(x)|^2 \le  C_1 \sigma^N \nu_\ell^{n-1+2|\alpha|}
\end{equation}
for some $C_1 > 0$ and any $\sigma> 0$ small.

Next, to estimate the sum over $J_2(\ell)$ we note that for each fixed $r,\w > 0$, one can take $\ell$ sufficiently large so that $|r\ell^{-1/2}-\w| \ge \frac{\w}{2}$, in which case  by \eqref{e:h_bound} that
\[
|h_{\w,\sigma}(\nu_\ell-\lambda_j)| \le C_N \lp1 + \frac{\w}{\sigma}\rp^{-N}\le C_N\lp\frac{\sigma}{\w}\rp^N
\]
for $|\nu_\ell-\lambda_j|\le r\ell^{-1/2}$. By the local Weyl law, we have
\begin{equation}\label{e:II_eqn}
\sum\limits_{j\in J_2(\ell)} |h_{\w,\sigma}(\nu_\ell-\lambda_j)||\partial_x^\alpha\varphi_j(x)|^2 \le C_2\lp\frac{\sigma}{\w}\rp^{\!N}\!\!\nu_\ell^{n-1+2|\alpha|}
\end{equation}
for some $C_2 > 0$ and all $\ell$ sufficiently large.

Finally, to estimate the sum over $J_3(\ell)$ we apply \propref{clustering_prop}, which implies that there exist $K>0$ and  $\ell_0>0$ such that for all $\ell\geq \ell_0$ 
\[
\sum\limits_{j\in J_3(\ell)}|\partial_x^\alpha\varphi_j(x)|^2 \le C r^{-2}\sum\limits_{|\lambda_j-\nu_\ell|\le K}|\partial_x^\alpha\varphi_j(x)|^2 \le C' r^{-2}\nu_\ell^{n-1+2|\alpha|},
\]
where the final inequality follows from the local Weyl law \eqref{e:Weyl_pseudo}. Therefore, since $h_{\w,\sigma}$ is bounded by a uniform constant for all $\w,\sigma > 0,$ we have
\begin{equation}\label{e:III_eqn}
\sum\limits_{j\in J_3(\ell)} |h_{\w,\sigma}(\nu_\ell-\lambda_j)||\partial_x^\alpha\varphi_j(x)|^2  \le C_3 r^{-2}\nu_\ell^{n-1+2|\alpha|}
\end{equation}
for some $C_3> 0$, all $r > 0$, and all $\ell$ sufficiently large. 
Combining \eqref{e:I_eqn}, \eqref{e:II_eqn}, and \eqref{e:III_eqn},
\[
\lim\limits_{\ell\to\infty}\frac{1}{\nu_\ell^{n-1+2|\alpha|}}\sum\limits_{j=0}^\infty |h_{\w,\sigma}(\nu_\ell-\lambda_j)||\partial_x^\alpha\varphi_j(x)|^2 \le C_1\sigma^{N} + C_2(\sigma/\w)^N + C_3 r^{-2}
\]
for all $\w < \frac{\pi}{2T}$ and all $\sigma, r > 0$. Recalling that $\w > 0$ was fixed in the statement of the proposition, we may send $\sigma\to 0$ and $r\to\infty$ to obtain \eqref{e:h_suffices}, which completes the proof.

\qed


\section{Proof of Theorem~\ref{t:main}}\label{s:tmain}

The proof of Theorem~\ref{t:main} follows the same steps as Theorem~\ref{t:main_thm_2}, but is somewhat simpler since the structure of trajectories with period smaller than $T$ is simpler than that of subperiodic loops. In fact, if $\rho\in S^*M$ is periodic with some minimal period $t<T$. Then, $t=\frac{T}{N}$. Since $t>\inj(M)$, this implies $N<\frac{T}{N}$.  

We start by integrating~\eqref{e:lambda*} with respect to $x$ with $C_{\bad}$ replaced by the identity:
\begin{align}
\begin{split}\label{e:lambda*2}
\sum\limits_{j=0}^{N-1}\int \rho_\sigma\ast\Pi_{[\lambda+\frac{2\pi j}{T}-\w,\lambda+\frac{2\pi j}{T}+\w]}(x,x) dx
& = \frac{1}{2\pi}\int\int\limits_{-\infty}^\infty e^{it(\lambda+(N-1)\pi/T)}\frac{\sin\lp\frac{\pi N t}{T}\rp}{\sin\lp\frac{\pi t}{T}\rp}\wh\psi_{\sigma}(t)U_t(x,x)\,dtdx\\
&= \mathscr{A}(\lambda,\sigma)+\mathscr{B}(\lambda,\sigma),
\end{split}
\end{align}
where (similar to~\eqref{e:smooth_sum})
\begin{equation}\label{e:smooth_sum2}
\mathscr A(\lambda,\sigma) = \sum\limits_{k\in\Z}e^{ikT(\lambda-\mathfrak b)}\int \partial_\lambda(f_k\ast\Pi_{[0,\lambda]}(I+Q_k))(x,x)dx
\end{equation}
and (similar to~\eqref{e:buzz})
\begin{equation}
\label{e:buzz2}\mathscr B(\lambda,\sigma) = \sum\limits_{k\in\Z}\frac{e^{ikT(\lambda-\mathfrak b)}}{2\pi}\int\mathcal F^{-1}_{t\mapsto\lambda}\left[ \wh g_{k,N}\lp U_t + Q_k(t)\rp(x,x) \right]dx,\end{equation}
where $g_{k,N}$ is given in~\eqref{e:gkN}.

The term $\mathscr A(\lambda,\sigma)$ is analyzed by integrating~\eqref{e:R_ep} and using the estimate from Proposition~\ref{p:smooth_prop} with $x=y$ and $C_{\bad}=I$ to obtain
$$\lim_{\sigma\to 0^+}\limsup_{\ell\to \infty}\Big(\mathscr A(\nu_\ell,\sigma) - \frac{2\pi N}{T}\cdot\frac{\nu_\ell^{n-1}}{(2\pi)^{n}}\text{vol}(\mathbb{S}^{n-1})\text{vol}_g(M) \Big)=0.
$$

To handle $\mathscr B(\lambda,\sigma) $, we proceed as in Proposition~\ref{p:rationalTimes}, but the analysis is considerably simpler. Let $\chi_{\delta'}\in C^\infty(M\times \omega)$ be such that $|\nabla_{x,\omega}\varphi(x,x,t,\omega)| < \delta'$ for all $(x,\omega)\in \supp\chi_{\delta'}.$  We then arrive at the next formula by following the analysis that led to~\eqref{B_chi_polar}, we obtain that $\int\mathcal F^{-1}_{t\mapsto\lambda}\left[ \wh g_{k,N} U_t(x,x)\right]dx$ can be expressed as a locally finite sum of terms of the form
\begin{align}
\begin{split}\label{B_chi_polar2}
& \frac{\lambda^n}{(2\pi)^n}\int\int\limits_{-\infty}^\infty\int\limits_{0}^\infty\int\limits_{S^{n-1}}e^{i\lambda(t + r\varphi(x,y,t,\omega))}\wh g_{k,N}(t)a^0(x,y,t,\omega)r^{n-1}\,dr\,dt\,d\omega dx \\
& \hskip 0.5in = \frac{\lambda^n}{(2\pi)^n}\int\int\limits_{-\infty}^\infty\int\limits_{0}^\infty\int\limits_{S^{n-1}}e^{i\lambda(t + r\varphi(x,x,t,\omega))}\chi_{\delta'}(x,\omega)\wh g_{k,N}(t)a^0(x,x,t,\omega)r^{n-1}\,dr\,dt\,d\omega dx + \mathcal O(\lambda^{-\infty}).
\end{split}
\end{align}
Then, performing stationary phase in $(r,t)$, we arrive at the analog of~\eqref{e:stationaryPhase}. Then, the expression in \eqref{B_chi_polar2} equals
\begin{align*}
\frac{\lambda^{n-1}}{(2\pi)^n}\int\limits_{S^{n-1}}\sum\frac{1}{|\partial_t\varphi|}e^{i\lambda t_c + i\frac{\pi}{4}|\sgn \Hess\wt \varphi|}\chi_{\delta'}(x,\omega)\wh g_{k,N}(t_c)a^0(x,x,t_c,\omega)r_c^{n-1}\,d\omega dx +\mathcal O_{\delta'}(\lambda^{n-2}),
\end{align*}
where the sum is taken over all critical points $(t_c(x,\omega),r_c(x,\omega))$. Since $|t_c-\frac{p}{q}T|<\w$ for some $0<p<q\leq N$, we obtain
$$
\int\mathcal F^{-1}_{t\mapsto\lambda}\left[ \wh g_{k,N} U_t(x,x)\right]dx =o(\lambda^{n-1}).
$$
The identical argument, but with $U_t$ replaced by $U_tQ_k$ shows that 
$$
\int\mathcal F^{-1}_{t\mapsto\lambda}\left[ \wh g_{k,N} U_tQ_k(x,x)\right]dx =O_k(\lambda^{n-2}).
$$
The proof of Theorem~\ref{t:main} is now completed by Lemma~\ref{l:finalStep} which implies
\begin{equation}\label{e:end_goal2}
\lim\limits_{\sigma \to 0^+}\limsup\limits_{\ell\to\infty}\nu_\ell^{1-n}\left|  \Pi_{[\nu_\ell-\w,\nu_\ell+\w]}(x,x) - \rho_{\sigma}\ast\Pi_{[\nu_\ell-\w,\nu_\ell+\w]}(x,x)\right|=0.
\end{equation}

\bibliography{master_bib}{}
\bibliographystyle{amsalpha}

\end{document}